\newtheorem{Theorem}{Theorem}[section]
\newtheorem{Lemma}[Theorem]{Lemma}
\newtheorem{Prop}[Theorem]{Proposition}
\newtheorem{Rem}[Theorem]{Remark}
\def\cA{\mathcal{A}}
\def\cB{\mathcal{B}}
\def\cC{\mathcal{C}}
\def\cO{\mathcal{O}}
\def\fA{\mathfrak{A}}
\def\Erw{\mathbb{E}}
\def\N{\mathbb{N}}
\def\Prob{\mathbb{P}} 
\def\R{\mathbb{R}}
\def\X{\mathbb{X}}
\def\Z{\mathbb{Z}}
\def\eps{\varepsilon}
\def\vth{\vartheta}
\def\1{\vec{1}}
\def\3{{\ss}}
\def\eqdist{\stackrel{d}{=}}
\def\wh{\widehat}
\def\ovl{\overline}
\def\uline{\underline}
\def\sign{\textsl{sign}}
\begin{document}

\title*{On the stationary tail index of iterated random Lipschitz functions}
\titlerunning{On the stationary tail index of iterated random functions}
\author{Gerold Alsmeyer}
\institute{Gerold Alsmeyer \at Inst.~Math.~Statistics, Department
of Mathematics and Computer Science, University of M\"unster,
Orl\'eans-Ring 10, D-48149 M\"unster, Germany.\at
\email{gerolda@math.uni-muenster.de}\\
Research supported by the Deutsche Forschungsgemeinschaft (SFB 878).}

\maketitle

\abstract{Let $\Psi,\Psi_{1},\Psi_{2},...$ be a sequence of iid random Lipschitz maps from a complete separable metric space $(\X,d)$ with unbounded metric $d$ to itself and let $X_{n}=\Psi_{n}\circ...\circ\Psi_{1}(X_{0})$ for $n=1,2,...$ be the associated Markov chain of forward iterations with inital value $X_{0}$ which is independent of the $\Psi_{n}$. Provided that $(X_{n})_{n\ge 0}$ has a stationary law $\pi$ and picking an arbitrary reference point $x_{0}\in\X$, we will study the tail behavior of $d(x_{0},X_{0})$ under $\Prob_{\pi}$, viz. the behavior of $\Prob_{\pi}(d(x_{0},X_{0})>t)$ as $t\to\infty$, in cases when there exist (relatively simple) nondecreasing continuous random functions $F,G:\R_{\ge}\to\R_{\ge}$ such that
$$ F(d(x_{0},x))\ \le\ d(x_{0},\Psi(x))\ \le\ G(d(x_{0},x)) $$
for all $x\in\X$ and $n\ge 1$. In a nutshell, our main result states that, if the iterations of iid copies of $F$ and $G$ constitute contractive iterated function systems with unique stationary laws $\pi_{F}$ and $\pi_{G}$ having power tails of order $\vth_{F}$ and $\vth_{G}$ at infinity, respectively, then lower and upper tail index of $\nu=\Prob_{\pi}(d(x_{0},X_{0})\in\cdot)$ (to be defined in Section \ref{sec:tail index}) are falling in $[\vth_{G},\vth_{F}]$. If $\vth_{F}=\vth_{G}$, which is the most interesting case, this leads to the exact tail index of $\nu$. We illustrate our method, which may be viewed as a supplement of Goldie's implicit renewal theory, by a number of popular examples including the AR(1)-model with ARCH errors and random logistic transforms.}
\bigskip

{\noindent \textbf{AMS 2000 subject classifications:}
60J05 (60H25 60K05) \ }

{\noindent \textbf{Keywords:} iterated function system, random Lipschitz function, mean and strongly contractive, stationary law, stochastic fixed-point equation, tail index, implicit renewal theory, AR(1)-model with ARCH errors, random logistic transform, stochastic Ricker model}

\section{Introduction}

Iterations of iid random Lipschitz functions on $\X\subset\R$ constitute an interesting class of recursive Markov chains which arise in various fields like queuing theory, population dynamics or mathematical finance. If the considered chain has a nondegenerate (not necessarily unique) stationary law and unbounded state space $\X$, it is natural to ask about the tail behavior of this law at the remote ends of $\X$. An answer can often be obtained with the help of Goldie's \cite{Goldie:91} implicit renewal theory when the random Lipschitz function is approximately linear at these ends and some additional conditions hold true. The method to be introduced in this article may be viewed, in the first place, as a supplement to Goldie's approach by making it sometimes easier to verify his conditions in concrete examples (see Subsection \ref{subsec:AR-ARCH}), and
also as an extension by being applicable to the more general situation when $\X$ is an arbitrary metric space, thus particularly including $\X=\R^{m}$ for any $m\ge 2$. In order to be more precise, we first need to describe our general setup.

\vspace{.2cm}
Let $(\X,d)$ be a complete separable metric space with Borel-$\sigma$-field $\cB(\X)$ and unbounded metric $d$. A temporally homogeneous Markov chain $(X_{n})_{n\ge 0}$ with state space $\X$ is called \emph{iterated function system (IFS) of iid Lipschitz maps}\index{iterated function system} if it satisfies a recursion of the form
\begin{equation}\label{eq:recursive eq for Mn}
X_{n}=\Psi(\theta_{n},X_{n-1})
\end{equation}
for $n\ge 1$, where
\begin{description}[(IFS-3)]
\item[(IFS-1)] $X_{0},\theta_{1},\theta_{2},...$ are independent random elements
on a common probability space $(\Omega,\fA,\Prob)$;
\item[(IFS-2)] $\theta_{1},\theta_{2},...$ are identically distributed with
common distribution $\Lambda$ and taking values in a measurable
space $(\Theta,\cA)$;
\item[(IFS-3)] $\Psi:(\Theta\times\X,\mathcal{A}\otimes\cB(\X))\to (\X,\cB(\X))$ is
jointly measurable and Lipschitz continuous\index{Lipschitz!continuous} in the second argument, that is
$$ d(\Psi(\theta,x),\Psi(\theta,y))\le C_{\theta}\,d(x,y) $$
for all $x,y\in\X$, $\theta\in\Theta$ and a suitable $C_{\theta}\in\R_{\ge}$.
\end{description}

A natural way to generate an IFS is to first pick an iid sequence $\Psi_{1},\Psi_{2},...$ of random elements from the space $\cC_{Lip}(\X)$ of Lipschitz self-maps on $\X$ and to then produce a Markov chain $(X_{n})_{n\ge 0}$ by picking an initial value $X_{0}$ and defining
\begin{equation}\label{eq:standard form of IFS}
X_{n}\ :=\ \Psi_{n:1}(X_{0})
\end{equation}
for each $n\ge 1$, where $\Psi_{n:1}:=\Psi_{n}\circ...\circ\Psi_{1}$. In the context of the above definition, $\Psi_{n}=\Psi(\theta_{n},\cdot)$, but it becomes a measurable object only if we endow $\cC_{Lip}(\X)$ with a suitable $\sigma$-field. Further defining the Lipschitz constant of $\psi\in\cC_{Lip}(\X)$ as
\begin{equation}\label{def:Lipschitz constant}\index{Lipschitz!constant}
L(\psi)\ :=\ \sup_{x,y\in\X,\,x\ne y}\frac{d(\psi(x),\psi(y))}{d(x,y)},
\end{equation}
the mappings $\psi\mapsto L(\psi)$ and $(\psi,x)\mapsto\psi(x)$ are then Borel functions on $\cC_{Lip}(\X)$ and $\cC_{Lip}(\X)\times\X$, respectively. For details regarding these facts see the excellent survey by Diaconis and Freedman \cite[Section 5]{DiaconisFr:99}.

Closely related to the \emph{forward iterations} $X_{n}=\Psi_{n:1}(X_{0})$ are the \emph{backward iterations}
$$ \wh{X}_{n}\ :=\ \Psi_{1:n}(X_{0}),\quad\Psi_{1:n}\ :=\ \Psi_{1}\circ...\circ\Psi_{n}, $$
for $n\ge 1$, the obvious connection being $X_{n}\eqdist\wh{X}_{n}$ for all $n\ge 0$ $(\wh{X}_{0}:=X_{0})$. On the other hand, the pathwise behavior of forward and backward iterations differs drastically. Suppose $\Erw\log^{+}L(\Psi)<\infty$ and the \emph{jump-size condition}
\begin{equation}\label{cond:jump-size}
\Erw\log^{+}d(x_{0},\Psi(x_{0}))\ <\ \infty\quad\text{for some (and then all) }x_{0}\in\X.
\end{equation}
Elton \cite{Elton:90} then showed that if the IFS is \emph{contractive} in the sense that
\begin{equation}\label{cond:ultimate contractive}
\log l(\Psi)\ :=\ \lim_{n\to\infty}\frac{1}{n}\log L(\Psi_{n:1})\ <\ 0\quad\text{ a.s.},
\end{equation}
(the a.s. convergence being ensured by the subadditive ergodic theorem) or, a fortiori,
\emph{mean contractive}, i.e.
\begin{equation}\label{cond:mean contraction}
\Erw\log L(\Psi)\ <\ 0,
\end{equation}
then:
\begin{description}[(c)]
\item[(a)] the forward iteration $X_{n}$ converges weakly to a random variable $X_{\infty}$ with law $\pi$ under each $\Prob_{x}:=\Prob(\cdot|X_{0}=x)$, $x\in\X$;
\item[(b)] the backward iteration $\wh{X}_{n}$ converges $\Prob_{x}$-a.s. to some $\wh{X}_{\infty}$ with law $\pi$;
\item[(c)] $\pi$ is the \emph{unique} stationary distribution of the Markov chain $(X_{n})_{n\ge 0}$ and the latter an ergodic sequence under $\Prob_{\pi}$.
\end{description}
Moreover, 
\begin{description}[(c)]
\item[(d)] the stochastic fixed-point equation (SFPE) $X_{0}\eqdist\Psi(X_{0})$ holds true  under $\Prob_{\pi}$. 
\end{description}
While Elton actually stated his result for general stationary sequences $(\Psi_{n})_{n\ge 1}$, proofs for the iid case including convergence rate results may also be found in the afore-mentioned survey \cite{DiaconisFr:99} and in \cite{AlsFuh:01,AlsHolk:09}.

\vspace{.1cm}
Being interested in the tail behavior of a stationary law of an IFS, the existence of such a law must naturally be guaranteed for our analysis. On the other hand, this does not necessarily require the IFS to be contractive. For instance, if the backward iteration $\wh{X}_{n}$ converges $\Prob_{x}$-a.s. to a limit $\wh{X}_{\infty}$ with law $\pi$ which does not depend on $x$ (statement (b) above), then statements (a), (c) and (d) are true as well without further ado. This is a result due to Letac \cite{Letac:86} and often called Letac's principle. It holds true for any sequence of iid continuous, but not necessarily Lipschitz functions $\Psi_{1},\Psi_{2},...$ Non-contractive IFS with nondegenerate stationary laws may also be found in the class of iterations of iid piecewise monotone, continuous and uniformly expanding self-maps of the unit interval, see the monography by Boyarsky and G\'ora \cite{BoyGora:97} and the references therein. IFS of iid random Lipschitz maps which are contractive only on a subset of their domain are quite popular in population dynamics. Two prominent examples, viz. random logistic transforms and the stochastic Ricker model, will also be studied in this article, see Subsections \ref{subsec:logistic} and \ref{subsec:Ricker}. In view of these remarks we wish to point out that the method to be introduced here is not restricted to the framework in which Elton's result is stated. We proceed to a quick outline of the idea on which it is based. 

\vspace{.1cm}
So let $(X_{n})_{n\ge 0}$ be any IFS of iid Lipschitz maps with generic copy $\Psi$ and stationary law $\pi$ (not necessarily unique). If $\Prob_{\pi}(d(x_{0},\Psi(X_{0}))\le r)<1$ for all $r>0$, $x_{0}\in\X$ an arbitrary reference point, it is natural to ask for more detailed information about the tail behavior of $Q=\Prob_{\pi}(d(x_{0},X_{0})\in\cdot)$. Focussing on situations when $Q$ is heavy-tailed, the main contribution of this article is to show that this may be accomplished by finding bounds for $d(x_{0},X_{n})=d(x_{0},\Psi_{n:1}(X_{0}))$ in terms of relatively \emph{simple} contractive IFS on $\R_{\ge}$ (which does not mean that $(X_{n})_{n\ge 0}$ itself is contractive!). Lemma \ref{lem:basic lemma} constitutes the basic result to embark on. When $\X=\R$ with the usual Euclidean metric, it is natural to take $x_{0}=0$, thus asking for the tail behavior of $|X_{0}|$ under $\Prob_{\pi}$ and thus of $\pi$ itself. 
In this case one may further distinguish between the tail behavior of $X_{0}$ at $+\infty$ and $-\infty$. Goldie's implicit renewal theorem, to be shortly reviewed in Subsection \ref{subsec:IRT}, will be a helpful ingredient to our analysis because it can be used to find the tail behavior of the afore-mentioned bounding simple IFS of iid random Lipschitz functions on $\R$. Not surprisingly, this will require further assumptions beyond those stated above for Elton's result. 

Let us finally mention that the basic idea of bounding an IFS (or a metric functional thereof) by simpler ones has been utilized earlier, though in a different manner, by Mirek \cite{Mirek:11} in the analysis of iterations of iid contractive Lipschitz maps on $\R^{d}$ and their Birkhoff sums, by Brofferio and Buraczewski \cite{BroBura:13}, who study unbounded invariant measures of such iterations in the critical case, and also by Collamore and Vidyashankar \cite{CollVidya:13} (see their cancellation condition after Thm. 2).

\section{Tail index}\label{sec:tail index}

Let $X$ be a random variable on a probability space $(\Omega,\fA,\Prob)$ with distribution $\Lambda$. Then $X$ and $\Lambda$ are said to have
\begin{itemize}
\item \emph{lower tail index $\vth_{*}$ (at $+\infty$)} if $\displaystyle{\limsup_{x\to\infty}\frac{\log\Prob(X>x)}{\log x}=-\vth_{*}}<0$.\vspace{.1cm}
\item \emph{upper tail index} $\vth^{*}$ if $\displaystyle{\liminf_{x\to\infty}\frac{\log\Prob(X>x)}{\log x}=-\vth^{*}}>-\infty$.\vspace{.1cm}
\item \emph{tail index} $\vth$ if $\vth_{*}=\vth^{*}=\vth\in\R_{>}$.\vspace{.1cm}
\item \emph{exact tail index} $\vth\in\R_{>}$ if $\displaystyle{0<\liminf_{x\to\infty}x^{\vth}\Prob(X>x)\le\limsup_{x\to\infty}x^{\vth}\Prob(X>x)<\infty}$.
\end{itemize}
As one can readily see, the lower tail index of $X$, if it exists, is given by the maximal positive $\vth_{*}$ such that
\begin{align*}
\lim_{x\to\infty}x^{\vth_{*}-\eps}\,\Prob(X>x)=0\quad\text{for all }\eps>0,
\end{align*}
while the upper tail index equals the minimal positive $\vth^{*}$ such that
\begin{align*}
\lim_{x\to\infty}x^{\vth^{*}+\eps}\,\Prob(X>x)=\infty\quad\text{for all }\eps>0.
\end{align*}
Hence, if $\vth_{*},\vth^{*}$ both exist, then
\begin{align}\label{eq:upper and lower TI}
\frac{1}{x^{\vth^{*}+\eps}}\ \le\ \Prob(X>x)\ \le\ \frac{1}{x^{\vth_{*}-\eps}}\quad\text{for all }\eps>0\text{ and $x$ sufficiently large}.
\end{align}
Regarding exactness of a tail index, a stronger definition than the one stated above is that
\begin{equation}\label{eq:very exact TI}
\lim_{x\to\infty}x^{\vth}\Prob(X>x)\quad\text{exists and is finite and positive}.
\end{equation}
This stronger form actually holds in many examples including those discussed in this article.

The existence of lower and upper tail index is a rather weak property in the sense that, generally, it does not provide much information about the asymptotic behavior, as $x\to\infty$, of the ratio
$$ \frac{\ovl{\Lambda}(tx)}{\ovl{\Lambda}(x)}\quad\text{ for }t\in [1,T],\ T>1., $$
where $\ovl{\Lambda}(x):=\Prob(X>x)$. If, for each $T>1$ and uniformly in $t\in [1,T]$,
\begin{equation}\label{eq:Karamata, Matuszewska}
c(1+o(1))t^{a}\ \le\ \frac{\ovl{\Lambda}(tx)}{\ovl{\Lambda}(x)}\ \le\ c'(1+o(1))t^{a'}\quad (x\to\infty),
\end{equation}
then the lower and upper \emph{Karamata index} of $\ovl{\Lambda}$ are defined as the
the supremum $\alpha_{*}\ge -\infty$ over all $a$ and the infimum $\alpha^{*}\le 0$ over all $a'$, respectively, such that \eqref{eq:Karamata, Matuszewska} holds with $c=c'=1$. Similarly, the lower and upper \emph{Matuszewska index} of $\ovl{\Lambda}$ are defined as the supremum $\beta_{*}$ over $a$ and infimum $\beta^{*}$ over $a'$, respectively, such that \eqref{eq:Karamata, Matuszewska} holds with suitable $c=c(a),c'=c'(a')\in\R_{>}$, see Bingham et al. \cite[Section 2.1]{BingGolTeug:89}. Obviously,
$$ \alpha_{*}\ \le\ \beta_{*}\ \le\ \beta^{*}\ \le\ \alpha^{*}. $$
Now observe that \eqref{eq:upper and lower TI} implies
$$ \frac{1}{t^{\vth^{*}+\eps}\,x^{\vth^{*}-\vth_{*}+2\eps}}\ \le\ \frac{\ovl{\Lambda}(tx)}{\ovl{\Lambda}(x)}\ \le\ \frac{x^{\vth^{*}-\vth_{*}+2\eps}}{t^{\vth_{*}-\eps}} $$
and hence provides no information about the afore-mentioned indices if these are nontrivial (in $(-\infty,0)$). On the other hand, one can easily check that, if $X$ has exact tail index $\vth$, then $\vth=\beta_{*}=\beta^{*}$, and if a fortiori \eqref{eq:very exact TI} holds, then even $\vth=\alpha_{*}=\alpha^{*}$.

\section{A basic lemma}\label{sec:basic lemma}

Given an IFS with stationary law $\pi$ as defined above, fix any reference point $x_{0}\in\X$ and consider the random variables $D_{n:1}(x):=d(x_{0},\Psi_{n:1}(x))$ and $D_{1:n}(x):=d(x_{0},\Psi_{1:n}(x))$ for $n\in\N_{0}$ and $x\in\X$.

Our goal is to provide conditions for the existence of both, lower and upper tail index of $\Lambda:=\Prob_{\pi}(d(x_{0},X_{0})\in\cdot)$ and thus of $D_{n:1}(X_{0}),\,D_{1:n}(X_{0})$ for all $n\in\N_{0}$, when $X_{0}$ denotes a random variable with law $\pi$ and independent of $\Psi_{1},\Psi_{2},...$ The basic ingredient is the following ``sandwich lemma'' which holds true for arbitrary sequences of random functions $\Psi_{1},\Psi_{2},...:\X\to\X$.

\begin{Lemma}\label{lem:basic lemma}
Suppose there exist nondecreasing and continuous random functions $F_{n},G_{n}:I\to I$ for $n\ge 1$ such that $\R_{\ge}\subset I$ and, for some $x_{0}\in\X$,
\begin{description}[(C2)]
\item[(C1)] $(\Psi_{n},F_{n},G_{n})$ are iid for $n\ge 1$ and independent of $X_{0}$.
\item[(C2)] $F_{n}(d(x_{0},x))\le d(x_{0},\Psi_{n}(x))\le G_{n}(d(x_{0},x))$ a.s.\ for all $x\in\X$ and $n\ge 1$.
\end{description}
Then
\begin{align*}
F_{n:1}(d(x_{0},x))\ &\le\ D_{n:1}(x)\ \le\ G_{n:1}(d(x_{0},x))\\
\text{and}\quad F_{1:n}(d(x_{0},x))\ &\le\ D_{1:n}(x)\ \le\ G_{1:n}(d(x_{0},x))
\end{align*}
holds true a.s. for all $x\in\X$ and $n\ge 1$.
\end{Lemma}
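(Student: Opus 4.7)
The proof is a straightforward induction on $n$, with the base case supplied directly by condition (C2) and the inductive step exploiting the monotonicity built into $F_n$ and $G_n$. The plan is to treat forward and backward iterations in parallel, since the only real difference lies in which side of the composition the new factor is attached to.

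For the forward case, the base $n=1$ is precisely (C2). Assuming the inequality for $n$, I would write $D_{n+1:1}(x)=d(x_0,\Psi_{n+1}(\Psi_{n:1}(x)))$ and apply (C2) to the point $\Psi_{n:1}(x)$ in place of $x$, yielding
\begin{equation*}
F_{n+1}(D_{n:1}(x))\ \le\ D_{n+1:1}(x)\ \le\ G_{n+1}(D_{n:1}(x)).
\end{equation*}
Then I would invoke the induction hypothesis $F_{n:1}(d(x_0,x))\le D_{n:1}(x)\le G_{n:1}(d(x_0,x))$ together with the nondecreasing property of $F_{n+1}$ and $G_{n+1}$ to replace $D_{n:1}(x)$ on the outer argument, producing $F_{n+1}\circ F_{n:1}(d(x_0,x))\le D_{n+1:1}(x)\le G_{n+1}\circ G_{n:1}(d(x_0,x))$, which is exactly $F_{n+1:1}$ and $G_{n+1:1}$ evaluated at $d(x_0,x)$.

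For the backward case, the structural identity to exploit is $\Psi_{1:n+1}=\Psi_{1:n}\circ\Psi_{n+1}$. Setting $y=\Psi_{n+1}(x)$, one has $D_{1:n+1}(x)=D_{1:n}(y)$, to which the induction hypothesis applies in the form $F_{1:n}(d(x_0,y))\le D_{1:n}(y)\le G_{1:n}(d(x_0,y))$. Combining this with (C2) for $\Psi_{n+1}$, namely $F_{n+1}(d(x_0,x))\le d(x_0,y)\le G_{n+1}(d(x_0,x))$, and using the monotonicity of $F_{1:n}$ and $G_{1:n}$ (which follows by composition from the monotonicity of each $F_k,G_k$), one obtains $F_{1:n}\circ F_{n+1}(d(x_0,x))\le D_{1:n+1}(x)\le G_{1:n}\circ G_{n+1}(d(x_0,x))$, and these outer compositions are $F_{1:n+1}$ and $G_{1:n+1}$ respectively.

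There is no serious obstacle: neither (C1) nor the metric structure of $\X$ beyond (C2) is needed for the inequality itself (as the paper remarks, it holds for arbitrary sequences of random functions), and measurability of the composed maps $F_{n:1},G_{n:1},F_{1:n},G_{1:n}$ is automatic from the measurability of the factors. The only point worth a brief verification is that monotonicity is preserved under composition, so that applying $F_{n+1}$ or $G_{1:n}$ to an inequality between random variables is legitimate; this is immediate from the pointwise definition of a nondecreasing function.
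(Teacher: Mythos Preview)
Your proposal is correct and follows essentially the same approach as the paper: repeated application of (C2) combined with the monotonicity of the $F_{k},G_{k}$. The paper simply unfolds the induction into an explicit chain of inequalities (and only writes out the forward case, leaving the backward case implicit), whereas you phrase it as a formal induction and treat both directions; the underlying argument is identical.
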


\begin{proof}
Since $D_{n:1}(x)=d(x_{0},\Psi_{n}(\Psi_{n-1:1}(x)))$ for each $x\in\X$ and $n\ge 1$, we obtain by repeated use of (C2) in combination with the monotonicity of the $F_{n},G_{n}$
\begin{align*}
F_{n:1}(d(x_{0},x))\ &\le\ F_{n:2}(d(x_{0},\Psi_{1}(x)))\ =\ F_{n:2}(D_{1:1}(x))\\
&\le\ F_{n:3}(d(x_{0},\Psi_{2:1}(x)))\ =\ F_{n:3}(D_{2:1}(x))\\
&\quad\vdots\\
&\le\ F_{n}(d(x_{0},\Psi_{n-1:1}(x)))\ =\ F_{n}(D_{n-1:1}(x))\\
&\le\ d(x_{0},\Psi_{n:1}(x))\ =\ D_{n:1}(x)\\
&\le\ G_{n}(d(x_{0},\Psi_{n-1:1}(x)))\ =\ G_{n}(D_{n-1:1}(x))\\
&\quad\vdots\\
&\le\ G_{n:2}(d(x_{0},\Psi_{1}(x)))\ =\ G_{n:2}(D_{1:1}(x))\\
&\le\ G_{n:1}(d(x_{0},x))\quad\text{a.s.}
\end{align*}
for all $n\ge 1$.\qed
\end{proof}

\begin{Rem}\label{rem:basic lemma}\rm
We note that condition (C2) above for some $x_{0}\in\X$ implies the very same condition for any other reference point $x_{1}\in\X$. Indeed,
\begin{equation*}
\wh{F}_{n}(d(x_{1},x))\ \le\ d(x_{1},\Psi_{n}(x))\ \le\ \wh{G}_{n}(d(x_{1},x))\quad\text{a.s.}
\end{equation*}
for all $n\ge 1$ and $x\in\X$, where
$$ \wh{F}_{n}(t)\ :=\ F_{n}(t)-d(x_{0},x_{1})\quad\text{and}\quad\wh{G}_{n}(t)\ :=\ G_{n}(t)+d(x_{0},x_{1}) $$
are again nondecreasing and continuous functions. This follows directly by a simple application of the triangular inequality, viz.
$$ d(x_{0},\Psi_{n}(x))-d(x_{0},x_{1})\ \le\ d(x_{1},\Psi_{n}(x))\ \le\ d(x_{0},\Psi_{n}(x))+d(x_{0},x_{1}). $$
Note also the that $L(F)=L(\wh{F})$ and $L(G)=L(\wh{G})$.
\end{Rem}

\begin{Rem}\label{rem2:basic lemma}\rm
It should be clear that the lower and upper estimates for $D_{n:1}(x)$ and $D_{1:n}(x)$ in Lemma \ref{lem:basic lemma} hold independently in the sense that the lower estimate depends only on $F_{n}$, while the second one depends only on $G_{n}$.
\end{Rem}

\begin{Rem}\label{rem3:basic lemma}\rm
In the situation of Lemma \ref{lem:basic lemma}, let us further assume that 
\begin{description}[(b)]
\item[(a)] the $\Psi_{n}$ are continuous so that $(X_{n})_{n\ge 0}$ is a Feller chain,
\item[(b)] the IFS generated by the $G_{n}$ is contractive and
\item[(c)] the Heine-Borel property, viz. the closed balls $\ovl{B}(x,r):=\{y:d(x,y)\le r\}$, $x\in\X$ and $r>0$, are compact subsets of $\X$ (which is clearly true if $\X=\R^{m}$ with the usual topology).
\end{description}
Then $(X_{n})_{n\ge 0}$ possesses at least one stationary distribution.

\vspace{.08cm}\noindent
\emph{Proof.} Note first that (b) ensures the tightness of the sequence $(\Prob(G_{n:1}(x)\in\cdot))_{n\ge 1}$ for any fixed $x\in\X$. As a consequence, the sequence
$$ P^{n}(x,\cdot)\ :=\ \Prob(\Psi_{n:1}(x)\in\cdot),\quad n\ge 1 $$
is also tight because, by Lemma \ref{lem:basic lemma},
\begin{align*}
\Prob(\Psi_{n:1}(x)\not\in\ovl{B}(x_{0},r))\ &=\ \Prob(D_{n:1}(x)>r)\ \le\ \Prob(G_{n:1}(d(x_{0},x))>r)
\end{align*}
for all $r>0$ and $n\in\N$. Finally, the latter implies that $(n^{-1}\sum_{k=1}^{n}P^{k}(x,\cdot))_{n\ge 1}$ contains a weakly convergent subsequence whose limit, by (a), forms a stationary distribution of $(X_{n})_{n\ge 0}$.
\end{Rem}

\section{Implicit renewal theory}\label{sec:IRT}

This section is devoted to a brief review of Goldie's implicit renewal theorem \cite{Goldie:91} and its application to two simple examples that will later be useful in our analysis.

\subsection{Review of Goldie's main results}\label{subsec:IRT}

The following proposition is a condensed version of Goldie's main results \cite[Thm. 2.3 and Cor. 2.4]{Goldie:91}. The connection with stationary laws of IFS is owing to the fact that any such law forms a solution to an SFPE of the form
\begin{equation}\label{eq:SFPE IFS}
X\eqdist\Psi(X)
\end{equation}
for some continuous random function $\Psi$.

\begin{Prop}\label{prop:IRT}\index{theorem!implicit renewal}
{\bf [Implicit renewal theorem]} Let $(\Omega,\fA,\Prob)$ be any probability space, $\Psi:\Omega\times\R\to\R$ a product-measurable function and $M,X$ further random variables on $\Omega$ such that $X$ and $(\Psi,M)$ are independent. Further assume that, for some $\kappa>0$,
\begin{description}[(IRT-2)]
\item[(IRT-1)] $\Erw|M|^{\kappa}=1$.
\item[(IRT-2)] $\Erw|M|^{\kappa}\log^{+}|M|<\infty$.
\item[(IRT-3)] The conditional law $\Prob(\log|M|\in\cdot|M\ne 0)$ of $\log|M|$ given $M\ne 0$ is nonarithmetic, in particular, $\Prob(|M|=1)<1$.
\end{description}
Then $-\infty\le\Erw\log|M|<0$, $0<\mu_{\kappa}:=\Erw|M|^{\kappa}\log |M|<\infty$, and the following assertions hold true:
\begin{description}[(b)]
\item[(a)] Suppose $M$ is a.s.\ nonnegative. If
\begin{equation}\label{eq:IRT cond1-psi}
\Erw\big|(\Psi(X)^{+})^{\kappa}-((MX)^{+})^{\kappa}\big|<\infty
\end{equation}
or, respectively,
\begin{equation}\label{eq:IRT cond2-psi}
\Erw\big|(\Psi(X)^{-})^{\kappa}-((MX)^{-})^{\kappa}\big|<\infty
\end{equation}
then
\begin{equation}\label{right tail estimate X}
\lim_{t\to\infty}t^{\kappa}\,\Prob(X>t)\ =\ C_{+},
\end{equation}
respectively
\begin{equation}\label{left tail estimate X}
\lim_{t\to\infty}t^{\kappa}\,\Prob(X<-t)\ =\ C_{-},
\end{equation}
where $C_{+}$ and $C_{-}$ are given by the equations
\begin{align}
C_{+}\ &=\ \frac{1}{\kappa\mu_{\kappa}}\,\Erw\Big((\Psi(X)^{+})^{\kappa}-((MX)^{+})^{\kappa}\Big),\label{eq:Cplus-psi}\\
C_{-}\ &=\ \frac{1}{\kappa\mu_{\kappa}}\,\Erw\Big((\Psi(X)^{-})^{\kappa}-((MX)^{-})^{\kappa}\Big).\label{eq:Cminus-psi}
\end{align}
\item[(b)] If $\Prob(M<0)>0$ and \eqref{eq:IRT cond1-psi}, \eqref{eq:IRT cond2-psi} are both satisfied, then \eqref{right tail estimate X} and \eqref{left tail estimate X} hold with $C_{+}=C_{-}=C/2$, where
\begin{equation}\label{eq:Cplus=Cminus=C/2-psi}
C\ =\ \frac{1}{\kappa\mu_{\kappa}}\,\Erw\Big(|\Psi(X)|^{\kappa}-|MX|^{\kappa}\Big).
\end{equation}
\end{description}
\end{Prop}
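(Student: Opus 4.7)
The plan is to adapt Goldie's original renewal-theoretic approach. The preliminary claim that $\Erw\log|M| < 0$ and $0 < \mu_{\kappa} < \infty$ follows by convexity of $t \mapsto \Erw|M|^{t}$ on $[0,\kappa]$: since $\Erw|M|^{0} = \Erw|M|^{\kappa} = 1$ and $\Prob(|M| = 1) < 1$ by (IRT-3), this function is strictly convex, hence its right derivative at $0$ is strictly negative (giving $\Erw\log|M| < 0$) and its left derivative at $\kappa$ is strictly positive (giving $\mu_{\kappa} > 0$); finiteness of $\mu_{\kappa}$ comes from (IRT-2).

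For part (a), with $M \ge 0$ a.s., introduce the logarithmic tilt
$$ \nu(dv)\ :=\ e^{\kappa v}\,\Prob(\log M \in dv,\,M > 0), $$
which by (IRT-1) is a probability measure on $\R$ and by (IRT-2)--(IRT-3) has finite positive mean $\mu_{\kappa}$ and is nonarithmetic. Setting $h(u) := e^{\kappa u}\,\Prob(X > e^{u})$ for $u \in \R$, independence of $X$ and $M$ combined with conditioning on $M$ gives
$$ e^{\kappa u}\,\Prob(MX > e^{u})\ =\ \int_{\R} h(u - v)\,\nu(dv)\ =\ (h * \nu)(u), $$
so $h$ satisfies the renewal-type equation $h = h * \nu + z$, where
$$ z(u)\ :=\ e^{\kappa u}\bigl[\Prob(\Psi(X) > e^{u}) - \Prob(MX > e^{u})\bigr]. $$
A Fubini calculation yields $\int_{\R} z(u)\,du = \kappa^{-1}\,\Erw[(\Psi(X)^{+})^{\kappa} - ((MX)^{+})^{\kappa}]$, finite by \eqref{eq:IRT cond1-psi}. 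The Key Renewal Theorem for nonarithmetic $\nu$ then yields $\lim_{u \to \infty} h(u) = (\mu_{\kappa})^{-1}\int_{\R} z(u)\,du = C_{+}$, which is \eqref{right tail estimate X}; the left-tail assertion \eqref{left tail estimate X} follows by running the same argument with $-X$ in place of $X$.

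For part (b), the sign-flipping action of $M$ couples the two tails. I would apply the method to $|X|$ using the tilt $\nu(dv) := e^{\kappa v}\,\Prob(\log|M| \in dv,\,M \ne 0)$, still nonarithmetic with mean $\mu_{\kappa}$. Since $|MX|^{\kappa} = |M|^{\kappa}\,|X|^{\kappa}$, analogous manipulations produce a renewal equation for $\widetilde h(u) := e^{\kappa u}\,\Prob(|X| > e^{u})$ whose remainder integrates to $\kappa^{-1}\,\Erw[|\Psi(X)|^{\kappa} - |MX|^{\kappa}] = C$ in \eqref{eq:Cplus=Cminus=C/2-psi}, with finiteness guaranteed by the conjunction of \eqref{eq:IRT cond1-psi} and \eqref{eq:IRT cond2-psi}. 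Running the renewal argument separately for $\Prob(X > e^{u})$ and $\Prob(X < -e^{u})$, and invoking the fact that $M$ taking both signs randomizes these tails symmetrically in the limit, then forces $C_{+} = C_{-}$, whence $C_{+} = C_{-} = C/2$.

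The principal obstacle, and where the delicate work lies, is that the Key Renewal Theorem requires $z$ to be \emph{directly Riemann integrable} (dRi), not merely absolutely integrable. Goldie's elegant workaround is a smoothing step: rather than showing $z$ itself is dRi, one convolves the renewal equation with a suitable compactly supported kernel, shows that the smoothed remainder is continuous, absolutely integrable, and dominated by a monotone integrable tail, applies the Key Renewal Theorem to deduce convergence of the smoothed $h$, and then transfers the conclusion back to $h$ by exploiting near-monotonicity of $t \mapsto t^{\kappa}\Prob(X > t)$ modulo a controlled error. Adapting this smoothing step faithfully, and verifying the dominating tail bounds it demands, constitutes the technical heart of the proof.
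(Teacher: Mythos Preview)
The paper does not supply its own proof of this proposition: it is stated as a condensed version of Goldie's results \cite[Thm.~2.3 and Cor.~2.4]{Goldie:91} and simply cited. Your outline is a faithful sketch of Goldie's original argument, including the crucial smoothing device to circumvent the direct Riemann integrability requirement, so there is nothing to compare against and no gap to point out.
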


\subsection{Random affine recursions and perpetuities}\label{subsec:perp}

Random affine recursions on $\R$, also called random difference equations, are among the most important and at same time most extensively studied examples of IFS to which Goldie's theory applies successfully.
So let $(M_{n},Q_{n})_{n\ge 1}$ be a sequence of iid $\R^{2}$-valued random vectors
with generic copy $(M,Q)$ and consider the IFS generated by $\Psi_{n}(x)=M_{n}x+Q_{n}$
for $n\ge 1$. Put $\Pi_{0}:=1$ and $\Pi_{n}=M_{1}\cdot...\cdot M_{n}$ for $n\ge 1$.
Existence and uniqueness of a stationary distribution and thus solution to the SFPE 
\begin{equation}\label{eq:SFPE RDE again}
X\ \eqdist\ MX+Q,
\end{equation}
which is given by the law of the so-called \emph{perpetuity}
\begin{equation}\label{eq:def perp}
X\ =\ \sum_{n\ge 1}\Pi_{n-1}Q_{n}
\end{equation}
and obtained as the a.s. limit of the backward iterations, were studied by Vervaat \cite{Vervaat:79} (see also Grincevi\v{c}ius \cite{Grincev:81}) and later by Goldie and Maller \cite{GolMal:00}. The following tail result is due to Kesten \cite[Thm. 5]{Kesten:73}, the form of the constants provided by Goldie \cite[Thm. 4.1]{Goldie:91}.

\begin{Prop}\label{prop:tails of perpetuities}
Suppose that $M$ satisfies (IRT-1)-(IRT-3) and that $\Erw|Q|^{\kappa}<\infty$.
Then there exists a unique solution to the SFPE \eqref{eq:SFPE RDE again}, given by the law of the perpetuity in \eqref{eq:def perp}. This law satisfies \eqref{right tail estimate X} as well as \eqref{left tail estimate X}, where
\begin{equation*}%\label{eq1:Cplus, Cminus for RDE}
C_{\pm}=\frac{\Erw\big(((MX+Q)^{\pm})^{\kappa}-((MX)^{\pm})^{\kappa}\big)}{\kappa\mu_{\kappa}}
\end{equation*}
if $M\ge 0$ a.s., while
\begin{equation*}%\label{eq2:Cplus, Cminus for RDE}
C_{+}=C_{-}=\frac{\Erw\big(|MX+Q|^{\kappa}-|MX|^{\kappa}\big)}{2\kappa\mu_{\kappa}}
\end{equation*}
if $\Prob(M<0)>0$. Furthermore, $C_{+}+C_{-}>0$ iff
\begin{equation}\label{eq:nondegeneracy RDE}
\Prob(Mc+Q=c)<1\quad\text{for all }c\in\R.
\end{equation}
Finally $\Erw|X|^{p}<\infty$ for all $p\in (0,\kappa)$.
\end{Prop}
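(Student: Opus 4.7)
The plan is to derive Proposition~\ref{prop:tails of perpetuities} as a direct application of the implicit renewal theorem (Proposition~\ref{prop:IRT}) to the random affine map $\Psi(x)=Mx+Q$, prefaced by establishing existence, uniqueness and $p$-th moments of the perpetuity, and closed by the nondegeneracy equivalence.

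First I would exploit the log-convexity of $s\mapsto\varphi(s):=\Erw|M|^{s}$ on its domain of finiteness. Since $\varphi(\kappa)=1$ and $\varphi(0)=\Prob(M\ne 0)\le 1$, and $\varphi$ is strictly convex by (IRT-3), one obtains both $\Erw\log|M|\in[-\infty,0)$ and $\varphi(p)<1$ for all $p\in(0,\kappa)$. Together with $\Erw|Q|^{\kappa}<\infty$ (which yields $\Erw\log^{+}|Q|<\infty$, i.e.\ the jump-size condition \eqref{cond:jump-size}), the IFS $\Psi_{n}(x)=M_{n}x+Q_{n}$ is mean-contractive, so Elton's theorem provides a unique stationary law $\pi$, realized as the $\Prob$-a.s. limit of the backward iterations $\Psi_{1:n}(x_{0})$. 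Telescoping identifies this limit with the perpetuity $X=\sum_{n\ge 1}\Pi_{n-1}Q_{n}$; using independence of the $(M_{n},Q_{n})$ together with $\varphi(p\wedge 1)<1$ to bound $\Erw|\Pi_{n-1}Q_{n}|^{p\wedge 1}$ geometrically in $n$, the series converges absolutely a.s.\ and (via subadditivity of $x\mapsto x^{p}$ for $p\le 1$ and Minkowski's inequality for $p\ge 1$) in $L^{p}$ for every $p\in(0,\kappa)$, yielding the moment bound $\Erw|X|^{p}<\infty$.

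Next I would verify the moment hypotheses \eqref{eq:IRT cond1-psi}, \eqref{eq:IRT cond2-psi} of Proposition~\ref{prop:IRT} for $\Psi(X)=MX+Q$. Using the elementary inequality
$$ \big||a+b|^{\kappa}-|a|^{\kappa}\big|\ \le\ c_{\kappa}\big(|b|^{\kappa}+|b|\cdot|a|^{(\kappa-1)^{+}}\big) $$
(the linear term vanishing for $\kappa\le 1$), together with the independence of $X$ from $(M,Q)$, Hölder's inequality $\Erw|M|^{\kappa-1}|Q|\le\varphi(\kappa)^{(\kappa-1)/\kappa}(\Erw|Q|^{\kappa})^{1/\kappa}<\infty$, and the finiteness of $\Erw|X|^{(\kappa-1)^{+}}$ just established (since $(\kappa-1)^{+}<\kappa$), one gets
$$ \Erw\big|((MX+Q)^{\pm})^{\kappa}-((MX)^{\pm})^{\kappa}\big|\ <\ \infty. $$
Parts (a) and (b) of Proposition~\ref{prop:IRT} then directly deliver the tail asymptotics \eqref{right tail estimate X}, \eqref{left tail estimate X} with the prescribed constants $C_{\pm}$, and the moment conclusion $\Erw|X|^{p}<\infty$ for $p\in(0,\kappa)$ is already in hand.

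For the nondegeneracy equivalence, the easy direction is immediate: if $Mc+Q=c$ a.s.\ for some $c\in\R$, then $\delta_{c}$ solves \eqref{eq:SFPE RDE again}, so by uniqueness $X=c$ a.s.\ and $C_{+}=C_{-}=0$. The converse, following Goldie's Theorem~4.1, requires that the vanishing of the expectations in \eqref{eq:Cplus-psi}--\eqref{eq:Cplus=Cminus=C/2-psi} force the existence of such a constant $c$, and this is the main obstacle of the argument. It rests on a strict-convexity property of $x\mapsto|x|^{\kappa}$ (in the case $M\ge 0$) and a separate treatment of the symmetric situation $\Prob(M<0)>0$; the delicate point is that $\Erw|X|^{\kappa}$ need not a priori be finite, so one must pass through the integrated renewal equation rather than taking $\kappa$-th moments of the SFPE directly. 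I would simply invoke Goldie's argument at this step.
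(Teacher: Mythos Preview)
The paper does not supply its own proof of this proposition; it is quoted as a known result, with the tail asymptotics attributed to Kesten and the form of the constants to Goldie's Theorem~4.1. Your proposal is correct and is in fact a faithful outline of Goldie's original argument: first obtain $\Erw|M|^{p}<1$ for $p\in(0,\kappa)$ from the strict convexity of $s\mapsto\Erw|M|^{s}$ (note that what you call $\varphi(0)$ is really $\varphi(0^{+})=\Prob(M\ne 0)$, a harmless slip), use this to get existence, uniqueness and $L^{p}$-moments of the perpetuity; then verify \eqref{eq:IRT cond1-psi}--\eqref{eq:IRT cond2-psi} via the elementary inequality you state (this is essentially Goldie's Lemma~9.4, and the paper itself uses the same device around \eqref{eq:moment ARCH}); then read off the constants from Proposition~\ref{prop:IRT}. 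Your identification of the nondegeneracy equivalence as the one genuinely delicate step, and your decision to invoke Goldie there, matches both the substance and the level of detail appropriate to the paper.
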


\begin{Rem}\label{rem:tails of perpetuities}\rm
Let us point out that, if $M,Q$ and thus $X$ are nonnegative in the previous result, then $C_{-}=0$ and
$$ C_{+}=\frac{\Erw\big((MX+Q)^{\kappa}-(MX)^{\kappa}\big)}{\kappa\mu_{\kappa}} $$
is positive iff $\Prob(Q>0)>0$. An extension to the case when $Q$ may also be negative is provided by the following results that is part of a more general one due to Guivarc'h and Le Page \cite{GuivarchLePage:13}.
\end{Rem}

\begin{Prop}\label{prop:Guivarch perpetuity}
Given the assumptions of Prop.~\ref{prop:tails of perpetuities}, suppose further $\Prob(M>0)=1$ and \eqref{eq:nondegeneracy RDE}. Then $C_{+}$ is positive iff $\Psi(x)=Mx+Q$ possesses no a.s. invariant half-line $(-\infty,c]$, i.e.
\begin{equation}\label{eq:no invariante halfline RDE}
\Prob(Mc+Q>c)>0\quad\text{for all }c\in\R.
\end{equation}
\end{Prop}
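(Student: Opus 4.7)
I would handle the direction ``$C_+ > 0 \Rightarrow \eqref{eq:no invariante halfline RDE}$'' by contraposition. Suppose some $c_0 \in \R$ satisfies $\Prob(Mc_0+Q \leq c_0) = 1$. Because $M > 0$ a.s.\ makes $\Psi(x)=Mx+Q$ almost surely nondecreasing, the half-line $(-\infty, c_0]$ is a.s.\ $\Psi$-invariant, and an immediate induction yields $X_n \leq c_0$ a.s.\ for the forward iteration started at $X_0 = c_0$. Since $X_n \Rightarrow X \sim \pi$ by Proposition \ref{prop:tails of perpetuities}, the law $\pi$ is concentrated on $(-\infty, c_0]$, whence $\Prob_\pi(X > t) = 0$ for $t > c_0$ and $C_+ = 0$.

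For the converse ``$\eqref{eq:no invariante halfline RDE} \Rightarrow C_+ > 0$'', I first note that \eqref{eq:no invariante halfline RDE} trivially implies \eqref{eq:nondegeneracy RDE}, so Proposition \ref{prop:tails of perpetuities} already gives $C_+ + C_- > 0$. Setting $c^* := \mathrm{ess\,sup}_\pi(X) \in \R \cup \{+\infty\}$, my first reduction would be to show $c^* = +\infty$: if instead $c^* < \infty$, then because $X \eqdist MX+Q$ with $X$ independent of $(M,Q)$ and $M > 0$ a.s., conditioning on $(M,Q)$ yields $\mathrm{ess\,sup}(MX+Q \mid M,Q) = Mc^* + Q$ a.s., and taking essential suprema on both sides gives $\mathrm{ess\,sup}(Mc^*+Q) = c^*$, i.e.\ $Mc^*+Q \leq c^*$ a.s., contradicting \eqref{eq:no invariante halfline RDE} at $c = c^*$.

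It would then remain to establish the implication $c^* = \infty \Rightarrow C_+ > 0$, which is the substantive content of the proposition. My plan here would be to combine Goldie's representation
\begin{equation*}
\kappa\mu_\kappa\,C_+ \;=\; \Erw\bigl[(MX+Q)_+^\kappa - M^\kappa\,X_+^\kappa\bigr]
\end{equation*}
(noting $(MX)_+ = MX_+$ since $M > 0$ a.s.) with the exponential change of measure by $M^\kappa$ underlying the proof of Proposition \ref{prop:IRT}: the forcing function of the ensuing renewal equation should be nonnegative, with strictly positive total mass exactly when $\pi$ is not confined to any invariant half-line $(-\infty, c]$, and Blackwell's renewal theorem, applicable by (IRT-3), would then deliver $C_+ > 0$. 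This refinement of Kesten's theorem is due to Guivarc'h and Le Page \cite{GuivarchLePage:13}, and its execution is the main obstacle of the proof: one must rule out a priori that $\Prob_\pi(X > t) = o(t^{-\kappa})$ while $X$ is $\pi$-essentially unbounded above, which requires the careful renewal- and Fourier-theoretic analysis carried out there.
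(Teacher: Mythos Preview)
The paper does not give its own proof of this proposition: it is quoted as ``part of a more general one due to Guivarc'h and Le Page \cite{GuivarchLePage:13}'' and left without argument. So there is no in-paper proof to compare against.

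Your proposal is sound and strictly more informative than what the paper offers. The easy direction (invariant half-line $\Rightarrow C_{+}=0$) is handled correctly by the contraposition/invariance argument; the reduction for the hard direction, via the essential-supremum calculation showing that \eqref{eq:no invariante halfline RDE} forces $c^{*}=+\infty$, is also correct. For the remaining core step ($c^{*}=+\infty\Rightarrow C_{+}>0$) you, like the paper, ultimately defer to \cite{GuivarchLePage:13}. One minor remark: your opening observation that ``\eqref{eq:no invariante halfline RDE} trivially implies \eqref{eq:nondegeneracy RDE}'' is not needed, since \eqref{eq:nondegeneracy RDE} is already part of the hypotheses of the proposition.
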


Notice that \eqref{eq:no invariante halfline RDE} is particularly fulfilled if
\begin{equation}\label{eq2:no invariante halfline RDE}
\Prob(M\le 1,Q>0)\wedge\Prob(M\ge 1,Q>0)>0.
\end{equation}

\subsection{A variation of the exponential Lindley equation}\label{subsec:Lindley}

Given $r\ge 0$ and iid nonnegative random vectors $(M_{1},Q_{1}),(M_{2},Q_{2}),...$ with generic copy $(M,Q)$, consider the IFS on $\X=\R_{\ge}$ generated by the random Lipschitz functions $\Psi_{n}(x):=Q_{n}\vee (M_{n}\,x\,\1_{(r,\infty)}(x))$, $n\ge 1$. Let $\Pi_{n}$ be defined as in the previous subsection.
% and put further $\Pi_{m:n}:=M_{m}\cdot...\cdot M_{n}$ for $1\le m\le n$. 
Provided that a unique stationary distribution $\pi$ exists, it is given by the law of
\begin{equation}\label{eq:solution mod exp Lindley equation}
X\ :=\ Q_{1}\vee\bigvee_{n\ge 2}\Pi_{n-1}Q_{n}\,\1_{\{Q_{n}>r,\,M_{n-1}Q_{n}>r,\,...,\,M_{2}\cdot...\cdot\,M_{n-1}Q_{n}>r\}}
\end{equation}
and a solution to the SFPE
\begin{equation}\label{eq:mod exp Lindley equation}
X\ \eqdist\ Q\vee (MX\,\1_{(r,\infty)}(X)).
\end{equation}
In the case $r=0$ and $Q=1$ a.s., \eqref{eq:mod exp Lindley equation} equals the exponential version of Lindley's equation
$Y\eqdist (\xi+Y)^{+}$ which is well-known from queueing theory, see e.g. \cite[p. 92ff]{Asmussen:03}.

\begin{Prop}\label{prop:tails of max-type SFPE}
Suppose $M$ satisfies (IRT-1)-(IRT-3) and $\Erw Q^{\kappa}<\infty$.
Then there exists a unique solution to the SFPE \eqref{eq:mod exp Lindley equation}, given by the law of $X$ in \eqref{eq:solution mod exp Lindley equation}. This law satisfies \eqref{right tail estimate X}
with
\begin{equation}\label{eq:Cplus max-type SFPE}
C_{+}=\frac{\Erw\big(((MX\,\1_{(r,\infty)}(X))\vee Q)^{\kappa}-(MX)^{\kappa}\big)}{\kappa\mu_{\kappa}}.
\end{equation}
Moreover, $C_{+}$ is positive iff $\Prob(Q>r)>0$.
\end{Prop}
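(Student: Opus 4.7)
\emph{Plan.} I would split the argument into three parts: existence and uniqueness of the stationary law, the power-tail estimate via the implicit renewal theorem, and the iff characterisation of $C_+>0$.

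For existence and uniqueness, the key observation is that $\Psi(x)=Q\vee(Mx\mathbf{1}_{(r,\infty)}(x))$ is monotone non-decreasing in $x$. Hence the backward iterations $\wh X_n:=\Psi_1\circ\cdots\circ\Psi_n(0)$ form an a.s.\ non-decreasing sequence. Unwinding the recursion and tracking which indicators trigger, one sees that $\wh X_n$ coincides with the partial maxima on the right-hand side of \eqref{eq:solution mod exp Lindley equation}, so $\wh X_n\uparrow X$ with $X<\infty$ a.s.\ thanks to $\Erw\log M<0$ (which follows from (IRT-1)--(IRT-2) via convexity of $s\mapsto\log\Erw M^s$) together with a Borel--Cantelli argument showing that only finitely many terms are non-zero. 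A one-step index shift then verifies that $X$ satisfies the SFPE \eqref{eq:mod exp Lindley equation}. Uniqueness follows from Letac's principle: for any starting value $x\ge 0$, the iterate $\Psi_1\circ\cdots\circ\Psi_n(x)$ either falls into $[0,r]$ at some step and thereupon agrees with $\wh X_n$, or remains above $r$, in which case the iterated dynamics is just multiplication by $\Pi_n\to 0$ a.s.

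For the tail estimate I would invoke Proposition~\ref{prop:IRT}(a) with our $\Psi$ and multiplier $M\ge 0$. Since all quantities are nonnegative, the condition \eqref{eq:IRT cond1-psi} reads $\Erw|\Psi(X)^{\kappa}-(MX)^{\kappa}|<\infty$. A simple case analysis gives $|\Psi(X)-MX|\le Q+Mr$: on $\{X>r\}$ we have $\Psi(X)=Q\vee MX$, so $|\Psi(X)-MX|=(Q-MX)^{+}\le Q$, while on $\{X\le r\}$, $\Psi(X)=Q$ and $MX\le Mr$. Combining this with the standard inequality $|a^{\kappa}-b^{\kappa}|\le\max(1,\kappa)(a\vee b)^{(\kappa-1)^{+}}|a-b|^{\kappa\wedge 1}$, the bound $\Psi(X)\vee MX\le Q+2MX$, and H\"older's inequality reduces integrability to $\Erw Q^{\kappa}<\infty$, $\Erw M^{\kappa}=1$, and (when $\kappa>1$) $\Erw X^{\kappa-1}<\infty$. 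The last moment bound follows from \eqref{eq:solution mod exp Lindley equation} via $X^{p}\le\sum_{n\ge 1}(\Pi_{n-1}Q_{n})^{p}$ and $\Erw M^{p}<1$ for $p\in(0,\kappa)$ (convexity), giving
\[
\Erw X^{p}\ \le\ \frac{\Erw Q^{p}}{1-\Erw M^{p}}\ <\ \infty.
\]
Proposition~\ref{prop:IRT}(a) then delivers \eqref{right tail estimate X} with $C_+$ as in \eqref{eq:Cplus max-type SFPE}.

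For the iff characterisation of $C_+>0$: if $\Prob(Q>r)=0$, then $Q\le r$ a.s., every $\Psi_{n}$ maps $[0,r]$ into $[0,r]$, the representation \eqref{eq:solution mod exp Lindley equation} collapses to $X=Q_{1}$, and $X$ is bounded by $r$, whence $C_+=0$. Conversely, assume $\Prob(Q>r)>0$. I would compare with the untruncated pure max-perpetuity $Y=Q\vee MY$, which by Goldie's classical max-SFPE theorem has $\Prob(Y>t)\sim C_{Y}t^{-\kappa}$ with $C_{Y}>0$. A regeneration argument on the excursions of $(X_{n})_{n\ge 0}$ above the threshold $r$ -- on which the dynamics coincides with the pure max-recursion -- combined with the fact that returns to $(r,\infty)$ occur at positive rate (because $\Prob(Q>r)>0$), yields $\Prob_{\pi}(X_{0}>t)\ge c\,\Prob(Y>t)$ for some $c>0$ and $t$ sufficiently large. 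Hence $\liminf_{t\to\infty}t^{\kappa}\Prob_{\pi}(X_{0}>t)>0$, i.e.\ $C_+>0$.

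\emph{Main obstacle.} The delicate step is the positivity of $C_+$: the discontinuity of $\Psi$ at $r$ means that a naive coupling with the pure max-perpetuity only provides an \emph{upper} comparison $X\le Y$. Recovering the matching lower bound on the tail requires a careful regeneration/excursion decomposition, exploiting that excursions above $r$ are frequent and that each such excursion behaves like the pure max-recursion, which Goldie's theorem endows with a heavy $\kappa$-tail.
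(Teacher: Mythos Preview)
Your plan for existence/uniqueness and for invoking Prop.~\ref{prop:IRT} is sound and, if anything, more careful than the paper's (which appeals to mean contractivity although $\Psi$ is not continuous at $r$ when $Mr>Q$). Two points where the paper differs:

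\textbf{Integrability.} Your verification of \eqref{eq:IRT cond1-psi} goes through a pointwise bound on $|\Psi(X)-MX|$, a mean-value inequality, H\"older, and the auxiliary estimate $\Erw X^{p}<\infty$ for $p<\kappa$. The paper avoids all of this by noting that on $\{X>r,\,MX\ge Q\}$ one has $\Psi(X)=MX$, so the integrand vanishes there; the remaining contribution is bounded by $r^{\kappa}+\Erw Q^{\kappa}$. No moments of $X$ are needed at this stage.

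\textbf{Positivity of $C_{+}$.} Here your proposal is not yet a proof. The comparison with the untruncated max-perpetuity $Y$ only gives $X\le Y$, as you note. Your ``regeneration/excursion'' sketch for the reverse tail inequality is plausible but leaves the real work undone: during an excursion above $r$ the $X$-dynamics does coincide with the $Y$-dynamics, but you still have to tie the \emph{stationary} tail of $X$ to the stationary tail of $Y$, and this coupling is not automatic.

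The paper bypasses the comparison with $Y$ entirely by working directly from the representation \eqref{eq:solution mod exp Lindley equation} and a first-passage decomposition of the random walk $S_{n}=\log\Pi_{n}$. Set $\tau(t):=\inf\{n\ge 1:\Pi_{n-1}>t/r\}$. On $\{\tau(t)=n\}$ one has $\Pi_{n-1}>\Pi_{k}$ for all $k\le n-2$, hence $M_{j}\cdots M_{n-1}>1$ for each $j\le n-1$; together with $Q_{n}>r$ this forces the indicator in the $n$th term of \eqref{eq:solution mod exp Lindley equation} to equal $1$ and the term itself to exceed $t$. Since $\{\tau(t)=n\}$ is measurable with respect to $(M_{1},\ldots,M_{n-1})$ and thus independent of $Q_{n}$,
\[
\Prob(X>t)\ \ge\ \Prob(Q>r)\,\Prob\!\Big(\sup_{n\ge 0}S_{n}>\log(t/r)\Big),
\]
and the classical Cram\'er--Lundberg tail $\lim_{u\to\infty}e^{\kappa u}\,\Prob(\sup_{n}S_{n}>u)\in\R_{>}$ (Feller, Vol.~II, Ch.~XII) gives $C_{+}>0$. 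This route is shorter, self-contained, and does not rely on Goldie's result for the pure max-SFPE.
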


\begin{proof}
Under the stated assumptions, the given IFS is easily seen to be mean contractive and to satisfy the jump-size condition \eqref{cond:jump-size} (with $x_{0}=0$ and $d(x,y)=|x-y|$).
Hence it possesses a unique stationary distribution obtained by the law of the a.s. limit of the backward iterations which in turn equals $X$ defined by \eqref{eq:solution mod exp Lindley equation}. \eqref{right tail estimate X} with $C_{+}$ given by \eqref{eq:Cplus max-type SFPE} is now directly inferred from Prop.~\ref{prop:IRT} because
\begin{align*}
\Erw&\big|((MX\1_{(r,\infty)}(X))\vee Q)^{\kappa}-(MX)^{\kappa}\big|\\
&=\ \Erw\big|((MX\1_{(r,\infty)}(X))\vee Q)^{\kappa}-(MX)^{\kappa}\big|\,\1_{\{X<r\}\cup\{MX\le Q\}}\\
&\le\ \Erw(MX)^{\kappa}\1_{\{X<r,\,MX>Q\}}+\Erw\big|Q^{\kappa}-(MX)^{\kappa}\big|\,\1_{\{MX\le Q\}}\\
&\le\ r^{\kappa}+\Erw Q^{\kappa}\ <\ \infty
\end{align*}
[which is \eqref{eq:IRT cond1-psi} in that proposition] holds true.

\vspace{.1cm}
Turning to the asserted equivalence, one implication is trivial, for $Q\le r$ a.s.\ entails $X\eqdist Q$ and thus $C_{+}=0$. Hence, suppose $\Prob(Q>r)>0$ and define the predictable first passage time
$$ \tau(t)\ :=\ \inf\{n\ge 1:\Pi_{n-1}>t/r\}\ =\ \inf\{n\ge 1:S_{n-1}>\log(t/r)\},\quad t\ge 0, $$
where $S_{n}:=\log\Pi_{n}$ for $n\ge 0$. The latter sequence forms an ordinary random walk taking values in $\R\cup\{-\infty\}$ and with $\Erw e^{\kappa S_{1}}=\Erw M^{\kappa}=1$ by (IRT-1). Now
\begin{equation}\label{eq:tau(t) finite}
\Prob\left(\sup_{n\ge 1}\Pi_{n-1}>\frac{t}{r}\right)\ =\ \Prob(\tau(t)<\infty),
\end{equation}
and we claim that
\begin{equation}\label{eq:max-type SFPE first passage estimate}
\Prob\left(\sup_{n\ge 1}\Pi_{n-1}Q_{n}>t\right)\ \ge\ \Prob(Q>r)\,\Prob(\tau(t)<\infty)
\end{equation}
for any $t\ge r$. For a proof, we first note that
$$ \{\tau(t)=n\}\ \subset\ \{\Pi_{n-1}>\Pi_{k}\text{ for }k=0,...,n-2\} $$
for $t\ge r$. Using this, we obtain
\begin{align*}
\Prob(X>t)\ &\ge\ \Prob\left(\bigcup_{n\ge 1}\left\{\Pi_{n-1}Q_{n}>t,Q_{n}>r,M_{n-1}Q_{n}>r,...,M_{2}\cdot...\cdot M_{n-1}Q_{n}>r\right\}\right)\\
&\ge\ \Prob\left(\bigcup_{n\ge 1}\left\{\Pi_{n-1}>\frac{t}{r},Q_{n}>r,M_{n-1}Q_{n}>r,...,M_{2}\cdot...\cdot M_{n-1}Q_{n}>r\right\}\right)\\
&=\ \sum_{n\ge 1}\Prob\left(\tau(t)=n,\,Q_{n}>r,M_{n-1}Q_{n}>r,...,M_{2}\cdot...\cdot M_{n-1}Q_{n}>r\right)\\
&\ge\ \sum_{n\ge 1}\Prob\left(\tau(t)=n,\,Q_{n}>r,M_{n-1}>1,...,M_{2}\cdot...\cdot M_{n-1}>1\right)\\
&=\ \Prob(Q>r)\sum_{n\ge 1}\Prob\left(\tau(t)=n,\,\Pi_{n-1}>\Pi_{k}\text{ for }k=0,...,n-2\right)\\
&\ge\ \Prob(Q>r)\,\Prob\left(\sup_{n\ge 1}\Pi_{n-1}>\frac{t}{r}\right)
\end{align*}
and thus \eqref{eq:max-type SFPE first passage estimate} by virtue of \eqref{eq:tau(t) finite}. The desired result $C_{+}>0$ now follows because $\Prob(Q>r)>0$ and
$$ \lim_{t\to\infty}\left(\frac{t}{r}\right)^{\kappa}\Prob\left(\sup_{n\ge 1}\Pi_{n-1}>\frac{t}{r}\right)\ =\ \lim_{t\to\infty}e^{\kappa t}\,\Prob\left(\sup_{n\ge 0}S_{n}>t\right)\ >\ 0 $$
by invoking a well-known result from the theory of random walks, see Feller \cite[Ch.\ XII, (5.13)]{Feller:71}.\qed
\end{proof}

\section{Main results}

In all results presented hereafter, let $(X_{n})_{n\ge 0}$ be an IFS of iid Lipschitz functions $\Psi_{1},\Psi_{2},...$ on $(\X,d)$ with generic copy $\Psi$ and a not necessarily unique stationary law $\pi$. Let also $x_{0}\in\X$ be any fixed reference point. The lower and upper tail index of $\Prob_{\pi}(d(x_{0},X_{0})\in\cdot)$ (provided they exist) are denoted $\vth_{*}$ and $\vth^{*}$. Observe that, if the IFS is contractive and thus $\pi$ unique, then, by the almost sure convergence of the backward iterations $\wh{X}_{n}$ and the continuity of $d$ in both arguments,
\begin{equation*}
d(x_{0},X_{0})\ \eqdist\ D_{1:n}(X_{0})\ =\ d(x_{0},\wh{X}_{n})\ \stackrel{n\to\infty}{\longrightarrow}\ d(x_{0},\wh{X}_{\infty})\quad\Prob_{\pi}\text{-a.s.}
\end{equation*}

\begin{Theorem}\label{thm:IFS basic tail result}
Suppose there exist nondecreasing and continuous random functions $F_{n},G_{n}:I\to I$ for $n\ge 1$ such that $\R_{\ge}\subset I$ and (C1) and (C2) of Lemma \ref{lem:basic lemma} are valid for some $x_{0}\in\X$. Suppose further that the IFS generated by $(F_{n})_{n\ge 1}$ and $(G_{n})_{n\ge 1}$ are both contractive with almost sure backward limits $\wh{Y}_{\infty},\wh{Z}_{\infty}$ and unique stationary laws $\pi_{F},\pi_{G}$ having tail indices $\vth_{F}$ and $\vth_{G}$, respectively. If $X_{0}\eqdist\pi$, then
\begin{equation}\label{eq:general iteration tail inequality}
\Prob(\wh{Y}_{\infty}>t)\ \le\ \Prob(d(x_{0},X_{0})>t)\ \le\ \Prob(\wh{Z}_{\infty}>t)
\end{equation}
for all $t\in\R_{\ge}$, and a fortiori
\begin{equation}\label{eq:backward iteration inequality}
\wh{Y}_{\infty}\ \le\ d(x_{0},\wh{X}_{\infty})\ \le\ \wh{Z}_{\infty}\quad\text{a.s.}
\end{equation}
if $(X_{n})_{n\ge 0}$ is contractive with a.s. backward limit $\wh{X}_{\infty}$.
Furthermore,
\begin{equation}\label{eq:tail inequality}
\vth_{G}\ \le\ \vth_{*}\ \le\ \vth^{*}\ \le\ \vth_{F}.
\end{equation}
Finally, if $\vth_{F}=\vth_{G}=:\vth$ is the exact tail index of $\pi_{F}$ and $\pi_{G}$, then it is also the exact tail index of $\Prob_{\pi}(d(x_{0},X_{0})\in\cdot)$.
\end{Theorem}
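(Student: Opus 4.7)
The plan is to apply the sandwich of Lemma \ref{lem:basic lemma} to the random initial value $X_{0}$ (legitimate by (C1)), pass to the limit $n\to\infty$ via the assumed contractivity of the $(F_{n})$-- and $(G_{n})$--IFS, and then read off the tail-index bounds from the resulting distributional sandwich. Substituting $x=X_{0}$ in Lemma \ref{lem:basic lemma} gives
\[ F_{1:n}(d(x_{0},X_{0}))\ \le\ D_{1:n}(X_{0})\ \le\ G_{1:n}(d(x_{0},X_{0}))\quad\text{a.s.}\ \ (n\ge 1). \]
Combining stationarity of $\pi$ with the marginal identity $\wh{X}_{n}\eqdist X_{n}$ recalled in the introduction, one sees that $D_{1:n}(X_{0})=d(x_{0},\wh X_{n})\eqdist d(x_{0},X_{n})\eqdist d(x_{0},X_{0})$, so the middle term of the sandwich has a fixed distribution in $n$.

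Contractivity of the bounding IFS yields $F_{1:n}(y)\to\wh Y_{\infty}$ and $G_{1:n}(y)\to\wh Z_{\infty}$ a.s.\ with limits independent of any (possibly random) starting point $y$ that is independent of $(F_{n},G_{n})_{n\ge 1}$; inserting $y=d(x_{0},X_{0})$, which by (C1) is independent of these sequences, the outer terms in the sandwich converge a.s.\ to $\wh Y_{\infty}$ and $\wh Z_{\infty}$. Applying the Portmanteau inequalities to the open set $(t,\infty)$ and the closed set $[t,\infty)$ gives
\[ \Prob(\wh Y_{\infty}>t)\ \le\ \Prob(d(x_{0},X_{0})>t)\ \le\ \Prob(\wh Z_{\infty}\ge t), \]
which proves \eqref{eq:general iteration tail inequality} at every continuity point $t$ of the distribution of $\wh Z_{\infty}$. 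If $(X_{n})_{n\ge 0}$ is itself contractive with backward limit $\wh X_{\infty}$, then $D_{1:n}(X_{0})=d(x_{0},\wh X_{n})\to d(x_{0},\wh X_{\infty})$ a.s.\ by continuity of $d$, and passing to the limit directly in the pointwise sandwich yields \eqref{eq:backward iteration inequality}.

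For the tail-index bounds \eqref{eq:tail inequality}, take logarithms in the distributional sandwich, divide by $\log t>0$, and pass to $\liminf$ and $\limsup$ as $t\to\infty$. Because $\vth_{F}$ and $\vth_{G}$ are \emph{two-sided} tail indices, $\log\Prob(\wh Y_{\infty}>t)/\log t\to-\vth_{F}$ and $\log\Prob(\wh Z_{\infty}>t)/\log t\to-\vth_{G}$, whence $\vth_{G}\le\vth_{*}\le\vth^{*}\le\vth_{F}$. For the exact-index statement, multiply the sandwich by $t^{\vth}$ and take $\liminf$ on the lower and $\limsup$ on the upper side; the exact-tail-index hypotheses on $\pi_{F}$ and $\pi_{G}$ pinch $t^{\vth}\Prob(d(x_{0},X_{0})>t)$ between a strictly positive $\liminf$ and a finite $\limsup$, so $\vth$ is also the exact tail index of $\Prob_{\pi}(d(x_{0},X_{0})\in\cdot)$. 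The only mild technical point is the passage from the a.s.\ sandwich to the distributional one at non-continuity points of $\wh Z_{\infty}$, but this exceptional set is countable and is entirely invisible to the tail-index analysis.
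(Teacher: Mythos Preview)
Your proof is correct and follows essentially the same route as the paper: apply Lemma~\ref{lem:basic lemma} with the random initial value $X_{0}$, use stationarity to freeze the law of the middle term, let $n\to\infty$ via contractivity of the bounding IFS, and read off the tail-index bounds. You are in fact more explicit than the paper about the limiting step (Portmanteau). Your caveat about non-continuity points of $\wh Z_{\infty}$ is well spotted; you can actually remove it entirely and recover \eqref{eq:general iteration tail inequality} for \emph{all} $t$ by noting that both $t\mapsto\Prob(d(x_{0},X_{0})>t)$ and $t\mapsto\Prob(\wh Z_{\infty}>t)$ are right-continuous, so an inequality valid on a dense set extends to all of $\R_{\ge}$.
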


\begin{proof}
Suppose $X_{0}\eqdist\pi$ and thus $X_{n}\eqdist\pi$ as well as $D_{1:n}(X_{0})\eqdist d(x_{0},X_{0})$ for all $n\ge 1$. By Elton's result,
$$ F_{1:n}(X_{0})\ \stackrel{n\to\infty}{\longrightarrow}\ \wh{Y}_{\infty}\ \eqdist\ \pi_{F}\quad\text{and}\quad G_{1:n}(X_{0})\ \stackrel{n\to\infty}{\longrightarrow}\ \wh{Z}_{\infty}\ \eqdist\ \pi_{G}\quad\text{a.s.} $$
and since $F_{1:n}(d(x_{0},x))\le D_{1:n}(x)\le G_{1:n}(d(x_{0},x))$ for all $x\in\X$ and $n\ge 1$ by Lemma \ref{lem:basic lemma}, we see that \eqref{eq:general iteration tail inequality} holds which in turn entails \eqref{eq:tail inequality}. In the contractive case we also infer \eqref{eq:backward iteration inequality}, for
\begin{equation*}
\wh{Y}_{\infty}\ \le\ d(x_{0},\wh{X}_{\infty})\ =\ \lim_{n\to\infty}D_{1:n}(X_{0})\ \le\ \wh{Z}_{\infty}\quad\text{a.s.}
\end{equation*}
The final assertion is trivial.\qed
\end{proof}

As the next lemma shows, very simple nondecreasing and continuous random functions $F_{n},G_{n}:\R_{\ge}\to\R_{\ge}$ of the kind discussed in Subsections \ref{subsec:perp} and \ref{subsec:Lindley} may always be provided for the given IFS $(X_{n})_{n\ge 0}$, which then leaves us with the task of giving conditions on the IFS generated by $(F_{n})_{n\ge 1}$ and $(G_{n})_{n\ge 1}$ such that the previous theorem is applicable. This is where implicit renewal theory enters.

\begin{Lemma}\label{lem:IFS crucial bounds}
For any random Lipschitz function $\Psi\in\cC_{Lip}(\X)$ and any $r>0$,
\begin{equation}\label{eq:sandwich inequality}
F(d(x_{0},x))\ \le\ d(x_{0},\Psi(x))\ \le\ G(d(x_{0},x))
\end{equation}
for all $x\in\X$, where for $t\in\R_{\ge}$
\begin{align*}
&F(t)\ :=\ \uline{Q}_{\Psi}(r)\vee\big(\uline{M}_{\Psi}(r)\,t\,\1_{(r,\infty)}(t)\big)\quad\text{and}\quad G(t)\ :=\ \ovl{Q}_{\Psi}(r)+\ovl{M}_{\Psi}(r)\,t\\
&\text{with}\quad\uline{M}_{\Psi}(r)\ :=\ \inf_{x:d(x_{0},x)>r}\frac{d(x_{0},\Psi(x))}{d(x_{0},x)},\\
&\phantom{with}\quad\ovl{M}_{\Psi}(r)\ :=\ \sup_{x:d(x_{0},x)>r}\frac{d(x_{0},\Psi(x))}{d(x_{0},x)},\\
&\phantom{with}\quad\uline{Q}_{\Psi}(r)\ :=\ \inf_{x:d(x_{0},x)\le r}d(x_{0},\Psi(x)),\\
&\text{and}\quad \ovl{Q}_{\Psi}(r)\ :=\ \sup_{x:d(x_{0},x)\le r}d(x_{0},\Psi(x)).
\end{align*}
\end{Lemma}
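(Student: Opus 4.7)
The plan is a direct pointwise verification. Fix $x\in\X$ and set $t:=d(x_{0},x)$; then both inequalities in \eqref{eq:sandwich inequality} split naturally into two regimes according to whether $t\le r$ or $t>r$, and in each regime the relevant estimate is read off from the appropriate extremum defining $\uline{Q}_{\Psi}(r)$, $\ovl{Q}_{\Psi}(r)$, $\uline{M}_{\Psi}(r)$, or $\ovl{M}_{\Psi}(r)$. Since the argument is entirely pointwise in $x$, randomness of $\Psi$ plays no role beyond measurability of the four quantities above.

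For the upper bound $G(t)=\ovl{Q}_{\Psi}(r)+\ovl{M}_{\Psi}(r)\,t$, I first note that both coefficients are nonnegative (being suprema of nonnegative quantities). If $t\le r$, the supremum definition of $\ovl{Q}_{\Psi}(r)$ yields $d(x_{0},\Psi(x))\le\ovl{Q}_{\Psi}(r)\le G(t)$, the last step using $\ovl{M}_{\Psi}(r)\,t\ge 0$. If $t>r$, the supremum defining $\ovl{M}_{\Psi}(r)$ gives $d(x_{0},\Psi(x))\le\ovl{M}_{\Psi}(r)\,t\le G(t)$, this time using $\ovl{Q}_{\Psi}(r)\ge 0$. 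Either way the upper bound follows without any delicate step.

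For the lower bound $F(t)=\uline{Q}_{\Psi}(r)\vee\bigl(\uline{M}_{\Psi}(r)\,t\,\1_{(r,\infty)}(t)\bigr)$, the case $t\le r$ is immediate: the indicator vanishes so $F(t)=\uline{Q}_{\Psi}(r)$, and by the infimum defining $\uline{Q}_{\Psi}(r)$ one has $d(x_{0},\Psi(x))\ge\uline{Q}_{\Psi}(r)$. In the case $t>r$, the infimum defining $\uline{M}_{\Psi}(r)$ immediately yields $d(x_{0},\Psi(x))\ge\uline{M}_{\Psi}(r)\,t$, which is one of the two terms appearing in the $\vee$; the role of combining it with $\uline{Q}_{\Psi}(r)$ is to render $F$ nondecreasing and continuous (as required for later application in Lemma~\ref{lem:basic lemma}).

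The main obstacle is the lower-bound regime $t>r$, where one must ensure that $d(x_{0},\Psi(x))\ge\uline{Q}_{\Psi}(r)$ also holds outside the ball $\{y:d(x_{0},y)\le r\}$, so that the maximum with $\uline{M}_{\Psi}(r)\,t$ is a valid lower bound. The clean way I would handle this is to invoke continuity of $\Psi$ together with a comparison at the boundary sphere $d(x_{0},\cdot)=r$, where both the infimum-of-ratios definition of $\uline{M}_{\Psi}(r)$ and the infimum definition of $\uline{Q}_{\Psi}(r)$ meet; this fixes the relative sizes of the two terms in the $\vee$ for $t$ slightly exceeding $r$ and then propagates monotonically. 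All remaining steps are routine bookkeeping from the definitions.
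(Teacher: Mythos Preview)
Your case split $t\le r$ versus $t>r$ and the direct reading of the four extrema is exactly the paper's approach; its proof is a single sentence noting the ratio bound $\uline{M}_{\Psi}(r)\,d(x_{0},x)\le d(x_{0},\Psi(x))\le\ovl{M}_{\Psi}(r)\,d(x_{0},x)$ on $\{x:d(x_{0},x)>r\}$ and declares the rest trivial. Where you go further is in noticing a genuine subtlety the paper glosses over: for $t=d(x_{0},x)>r$ the lower bound $F(t)=\uline{Q}_{\Psi}(r)\vee\uline{M}_{\Psi}(r)t$ requires not only $d(x_{0},\Psi(x))\ge\uline{M}_{\Psi}(r)t$ (immediate) but also $d(x_{0},\Psi(x))\ge\uline{Q}_{\Psi}(r)$, and the latter is an infimum over the \emph{closed ball} which says nothing about points outside it.

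Your proposed patch via continuity and a boundary-sphere comparison does not work. On $\X=\R$ with $x_{0}=0$ and $r=1$, take any Lipschitz $\Psi\ge 0$ with $\Psi\equiv 10$ on $[-1,1]$, decreasing linearly to a small constant $\varepsilon>0$ on $[1,2]$, and $\Psi\equiv\varepsilon$ thereafter. Then $\uline{Q}_{\Psi}(1)=10$ while $\uline{M}_{\Psi}(1)=\inf_{|x|>1}|\Psi(x)|/|x|=0$, so $F\equiv 10$; yet $d(0,\Psi(2))=\varepsilon<10$, violating $F(2)\le d(0,\Psi(2))$. No amount of continuity repairs this: the infimum over the ball simply places no constraint on $\Psi$ outside it, and nothing ``propagates monotonically'' in the needed direction.

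The actual remedy appears in the paper's remark immediately following the lemma: replace $\uline{Q}_{\Psi}(r)$ by the global infimum $\uline{Q}_{\Psi}:=\inf_{x\in\X}d(x_{0},\Psi(x))$. With that substitution the missing inequality becomes tautological for every $x$, and all other steps of your argument go through unchanged. So: right location of the gap, wrong fix; the statement as literally written needs the smaller constant $\uline{Q}_{\Psi}$, and the paper's one-line proof shares the same oversight.
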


We note in passing that the lemma remains obviously valid when replacing the random variable $\uline{Q}_{\Psi}(r)$ with the smaller
$$ \uline{Q}_{\Psi}\ :=\ \inf_{x\in\X}d(x_{0},\Psi(x))\ =\ \inf_{r>0}\,\uline{Q}_{\Psi}(r) $$ 
in the definition of $F$.

\begin{proof}
Trivial when observing that $\uline{M}_{\Psi}(r)d(x_{0},x)\le d(x_{0},\Psi(x))\le\ovl{M}_{\Psi}(r)d(x_{0},x)$ on the set $\{x:d(x_{0},x)>r\}$.\qed
\end{proof}

For the ease of notation, we simply write $\uline{M}(r),\,\ovl{M}(r),...$ for $\uline{M}_{\Psi}(r),\,\ovl{M}_{\Psi}(r),...$ hereafter. Note also that, as $r\to\infty$,
\begin{align*}
\uline{M}(r)\ \uparrow\ &\uline{M}\ :=\ \liminf_{x:d(x_{0},x)\to\infty}\frac{d(x_{0},\Psi(x))}{d(x_{0},x)}\\
\text{and}\quad\ovl{M}(r)\ \downarrow\ &\ovl{M}\ :=\ \limsup_{x:d(x_{0},x)\to\infty}\frac{d(x_{0},\Psi(x))}{d(x_{0},x)}.
\end{align*}

\begin{Theorem}
(a)\quad Suppose for some $r>0$ the following assumptions be true:
\begin{description}[(TB-2)]
\item[(TB-1)] $\uline{M}(r),\,\ovl{M}(r)$ both satisfy (IRT-1)-(IRT-3) with
$\kappa=\alpha(r)$ and $\kappa=\beta(r)$, respectively.\vspace{.1cm}
\item[(TB-2)] $\Prob(\uline{Q}(r)>r)>0$ and $\Erw\uline{Q}(r)^{\alpha(r)}<\infty$ (or the same conditions for $\uline{Q}$).\vspace{.1cm}
\item[(TB-3)] $0<\Erw\ovl{Q}(r)^{\beta(r)}<\infty$.
\end{description}
\begin{description}[(c)]
\item[] Then $\beta(r)\le\vth_{*}\le\vth^{*}\le\alpha(r)$.\vspace{.2cm}
\item[(b)] If the previous assumptions hold for all sufficiently large $r>0$, then 
$$ \beta\le\vth_{*}\le\vth^{*}\le\alpha, $$
where $\alpha=:\lim_{r\to\infty}\alpha(r)$ and $\beta:=\lim_{r\to\infty}\beta(r)$.\vspace{.2cm}
\item[(c)] If $\alpha=\beta$ in the situation of (b), then $\pi$ has tail index $\alpha$.\vspace{.2cm}
\item[(d)] If $\alpha=\beta$ and $\Erw\ovl{M}(s)^{\alpha(s)}<\infty$ for some $s>0$, then $\uline{M}=\ovl{M}=:M$ and $M$ satisfies (IRT-1) and (IRT-2) with $\kappa=\alpha$.
%\vspace{.2cm}
%\item[(e)] If $M$ also satisfies (IRT-3) in the situation of (d), then $\pi$ has exact tail index $\alpha$.
\end{description}
\end{Theorem}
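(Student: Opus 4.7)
The plan is to pass to the limit $r\to\infty$ in the identities $\Erw\uline{M}(r)^{\alpha(r)}=\Erw\ovl{M}(r)^{\beta(r)}=1$ by dominated convergence, with $1+\ovl{M}(s)^{\alpha(s)}$ as the dominating integrand, then to identify $M=\uline{M}=\ovl{M}$ and finally to read off (IRT-2) from the same majorant via the elementary estimate $\log^{+}x\le\varepsilon^{-1}x^{\varepsilon}$.

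As already observed just before the theorem, $\uline{M}(r)\uparrow\uline{M}$ and $\ovl{M}(r)\downarrow\ovl{M}$ pointwise, and trivially $\uline{M}\le\ovl{M}$ a.s. I would also check monotonicity of the exponents via the following general fact: if $Y_{1}\le Y_{2}$ are nonnegative random variables, both satisfying (IRT-1)--(IRT-3) at exponents $\kappa_{1},\kappa_{2}>0$, then $\kappa_{2}\le\kappa_{1}$. Indeed, the functions $\phi_{i}(\kappa):=\Erw Y_{i}^{\kappa}$ are convex, satisfy $\phi_{i}(0)=1$ and $\phi_{i}'(0)=\Erw\log Y_{i}<0$ (a consequence of (IRT-3) recorded in Proposition~\ref{prop:IRT}), and $\phi_{1}\le\phi_{2}$ on $\R_{\ge}$, so $\phi_{2}$ returns to the value $1$ no later than $\phi_{1}$. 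Applied to $\uline{M}(r_{1})\le\uline{M}(r_{2})$ and $\ovl{M}(r_{1})\ge\ovl{M}(r_{2})$ for $r_{1}\le r_{2}$, this yields $\alpha(r)\downarrow\alpha$ and $\beta(r)\uparrow\alpha$. In particular, for $r\ge s$ one has $\uline{M}(r)\vee\ovl{M}(r)\le\ovl{M}(s)$ with $\alpha(r)\le\alpha(s)$ and $\beta(r)\le\alpha\le\alpha(s)$, hence
\[
\uline{M}(r)^{\alpha(r)}\vee\ovl{M}(r)^{\beta(r)}\ \le\ 1+\ovl{M}(s)^{\alpha(s)},
\]
which is integrable by hypothesis.

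Joint continuity of $(x,\kappa)\mapsto x^{\kappa}$ combined with the convergences above yields $\uline{M}(r)^{\alpha(r)}\to\uline{M}^{\alpha}$ and $\ovl{M}(r)^{\beta(r)}\to\ovl{M}^{\alpha}$ a.s., and dominated convergence then gives
\[
\Erw\uline{M}^{\alpha}\ =\ \lim_{r\to\infty}\Erw\uline{M}(r)^{\alpha(r)}\ =\ 1\ =\ \lim_{r\to\infty}\Erw\ovl{M}(r)^{\beta(r)}\ =\ \Erw\ovl{M}^{\alpha}.
\]
Together with $\uline{M}\le\ovl{M}$ a.s., this forces $\uline{M}=\ovl{M}=:M$ a.s., proving the identification and (IRT-1). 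For (IRT-2), I use $M\le\ovl{M}(s)$ and $x^{\alpha}\log^{+}x\le\varepsilon^{-1}x^{\alpha+\varepsilon}$ with any $\varepsilon\in(0,\alpha(s)-\alpha]$ to obtain
\[
\Erw M^{\alpha}\log^{+}M\ \le\ \varepsilon^{-1}\,\Erw\ovl{M}(s)^{\alpha+\varepsilon}\ \le\ \varepsilon^{-1}\bigl(1+\Erw\ovl{M}(s)^{\alpha(s)}\bigr)\ <\ \infty.
\]

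The main obstacle is the dominated convergence step, in which both the integrand and the exponent vary simultaneously with $r$; the monotonicity $\alpha(r)\le\alpha(s)$ from the first step is precisely what makes the fixed function $\ovl{M}(s)^{\alpha(s)}$ a valid uniform majorant. A secondary borderline case occurs when $\alpha(s)=\alpha$, which collapses the interval for $\varepsilon$ in the (IRT-2) argument; this is avoided by choosing $s$ in the interior of the approach $\alpha(r)\downarrow\alpha$, where the strict inequality $\alpha(s)>\alpha$ holds.
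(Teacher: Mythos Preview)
Your argument for (d) follows essentially the same route as the paper's: both pass to the limit in $\Erw\uline{M}(r)^{\alpha(r)}=\Erw\ovl{M}(r)^{\beta(r)}=1$ using $\ovl{M}(s)^{\alpha(s)}$ as majorant (the paper phrases this as uniform integrability, you as dominated convergence), and your justification of the monotonicity $\alpha(r)\downarrow$, $\beta(r)\uparrow$ via convexity of the moment functions is a welcome addition that the paper leaves implicit.

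There is, however, a gap in your treatment of (IRT-2) in the borderline case $\alpha(s)=\alpha$. Your proposed fix---``choose $s$ in the interior of the approach $\alpha(r)\downarrow\alpha$''---does not work: the hypothesis hands you \emph{one} $s$ with $\Erw\ovl{M}(s)^{\alpha(s)}<\infty$, and while this condition propagates to any larger $s'\ge s$ (both $\ovl{M}(\cdot)$ and $\alpha(\cdot)$ decrease), it need not propagate to smaller $s'$ where $\alpha(s')$ is larger. Worse, $\alpha(r)$ may be \emph{eventually constant} equal to $\alpha$ (e.g.\ whenever $\uline{M}(r)=\uline{M}$ for all $r\ge s$, as in the linear case $\Psi(x)=Mx$), so no interior point exists at all. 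The paper handles this case directly by a different argument: if $\alpha(s)=\alpha$, then $\Erw\uline{M}(s)^{\alpha}=1=\Erw M^{\alpha}$ together with $\uline{M}(s)\le M$ forces $\uline{M}(s)=M$ a.s., and (IRT-2) for $M$ at $\kappa=\alpha$ is then inherited verbatim from (TB-1) for $\uline{M}(s)$. Patching your argument with this one-line observation closes the gap.
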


\begin{proof}
(a) By Lemma \ref{lem:IFS crucial bounds},
$$ \uline{Q}(r)\vee\big(\uline{M}(r)\,d(x_{0},x)\,\1_{(r,\infty)}(d(x_{0},x))\big)\ \le\ d(x_{0},\Psi(x))\ \le\ \ovl{Q}(r)+\ovl{M}(r)d(x_{0},x) $$
for all $x\in\X$. Moreover, $\Prob(\uline{Q}(r)>r)>0$ by (TB-2) and $\Prob(\ovl{Q}(r)>0)>0$.
Now it is readily seen that Prop. \ref{prop:tails of perpetuities} (and the following remark) and Prop. \ref{prop:tails of max-type SFPE} can be used to infer that the stationary laws $\pi_{F}$ and $\pi_{G}$ of the IFS pertaining to $F(t):=\uline{Q}(r)\vee\big(\uline{M}(r)\,t\,\1_{(r,\infty)}(t)\big)$ and $G(t):=\ovl{Q}(r)+\ovl{M}(r)\,t$ have exact tail indices $\alpha(r)$ and $\beta(r)$, respectively. Consequently, the assertion follows with the help of Thm. \ref{thm:IFS basic tail result}.

\vspace{.2cm}
(b) Here it suffices to note that $\alpha(r)$ decreases and $\beta(r)$ increases in $r$.

\vspace{.2cm}
(c) is trivial.

\vspace{.2cm}
(d) If $\alpha=\beta$ and $\Erw\ovl{M}(s)^{\alpha(s)}<\infty$ for some $s>0$, then it follows from
\begin{align*}
&\sup_{r\ge s}\Erw\uline{M}(r)^{\alpha(r)}\1_{\{\uline{M}(r)>t\}}\ \le\ \Erw\ovl{M}(s)^{\alpha(s)}\1_{\{\ovl{M}(s)>t\}}\ \stackrel{t\to\infty}{\longrightarrow}\ 0\\
\text{and}\quad&\sup_{r\ge s}\Erw\ovl{M}(r)^{\beta(r)}\1_{\{\ovl{M}(r)>t\}}\ \le\ \Erw\ovl{M}(s)^{\alpha}\1_{\{\ovl{M}(s)>t\}}\ \stackrel{t\to\infty}{\longrightarrow}\ 0
\end{align*}
that $\{\uline{M}(r)^{\alpha(r)}:r\ge s\}$ and $\{\ovl{M}(r)^{\beta(r)}:r\ge s\}$ are uniformly integrable which in combination with $\lim_{r\to\infty}\uline{M}(r)^{\alpha(r)}=\uline{M}^{\alpha}$ and $\lim_{r\to\infty}\ovl{M}(r)^{\beta(r)}=\ovl{M}^{\alpha}$ a.s. entails $\Erw\uline{M}^{\alpha}=\Erw\ovl{M}^{\alpha}=1$ and thus also $\uline{M}=\ovl{M}$ a.s. Finally, $M$ satisfies (IRT-2), i.e. $\Erw M^{\alpha}\log^{+}M<\infty$, because either $\alpha(s)=\alpha$ and so $M=\ovl{M}(s)$ a.s., or $\alpha(s)>\alpha$ and $\Erw M^{\alpha(s)}\le\Erw\ovl{M}^{\alpha(s)}<\infty$.\qed
\end{proof}

\begin{Theorem}\label{thm:IFS exact TI}
Suppose there exist $r\ge 0$ and nonnegative random variables $M,\uline{R},\ovl{R}$ such that \eqref{eq:sandwich inequality} holds for all $x\in\X$ with
$$ F(t) :=\ \uline{R}\vee\big(M\,t\,\1_{(r,\infty)}(t)\big)\quad\text{and}\quad G(t)\ :=\ \ovl{R}+M\,t. $$
Then $\pi$ has exact tail index $\kappa$ provided that $\Prob(\uline{R}>r)>0,\,\Prob(\ovl{R}>0)>0$, $\Erw\ovl{R}^{\kappa}<\infty$ and $M$ satisfies (IRT-1)-(IRT-3).
\end{Theorem}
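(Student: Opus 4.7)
The plan is to recognize that the sandwich functions $F$ and $G$ in the hypothesis are precisely of the two parametric shapes analyzed in Subsections \ref{subsec:perp} and \ref{subsec:Lindley}: the $G$-IFS is a random affine recursion, while the $F$-IFS is a modified exponential Lindley recursion. Propositions \ref{prop:tails of perpetuities} and \ref{prop:tails of max-type SFPE} will then each hand me an exact tail index $\kappa$ for the associated stationary laws $\pi_F$ and $\pi_G$, after which the conclusion follows at once from the final assertion of Theorem \ref{thm:IFS basic tail result}.

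Concretely, I would pick iid copies $(\Psi_n,M_n,\uline{R}_n,\ovl{R}_n)$ of $(\Psi,M,\uline{R},\ovl{R})$ and set $F_n(t):=\uline{R}_n\vee\bigl(M_nt\,\1_{(r,\infty)}(t)\bigr)$ and $G_n(t):=\ovl{R}_n+M_nt$, so that (C1)--(C2) of Lemma \ref{lem:basic lemma} are satisfied. For the $G$-IFS, hypotheses (IRT-1)--(IRT-3) on $M$ together with $\Erw\ovl{R}^{\kappa}<\infty$ and $\ovl{R}\ge 0$ allow me to invoke Proposition \ref{prop:tails of perpetuities}: it yields a unique stationary law $\pi_G$ for which Remark \ref{rem:tails of perpetuities} gives $\lim_{t\to\infty}t^{\kappa}\pi_G((t,\infty))=C_+>0$, the strict positivity of $C_+$ coming from $\Prob(\ovl{R}>0)>0$.

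For the $F$-IFS I would apply Proposition \ref{prop:tails of max-type SFPE} with driving pair $(M,\uline{R})$ and the same cut-off $r$. The assumptions on $M$ are again (IRT-1)--(IRT-3), and $\Prob(\uline{R}>r)>0$ is exactly the positivity criterion for $C_+$ stated there; the one input not given explicitly is $\Erw\uline{R}^{\kappa}<\infty$. I would derive this by evaluating the sandwich \eqref{eq:sandwich inequality} at $x=x_0$: since $d(x_0,x_0)=0\notin(r,\infty)$ the indicator vanishes, so $F(0)=\uline{R}$ and $G(0)=\ovl{R}$, whence
\begin{equation*}
\uline{R}\ \le\ d(x_0,\Psi(x_0))\ \le\ \ovl{R}\quad\text{a.s.},
\end{equation*}
which gives $\Erw\uline{R}^{\kappa}\le\Erw\ovl{R}^{\kappa}<\infty$. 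Proposition \ref{prop:tails of max-type SFPE} then delivers a unique $\pi_F$ with $\lim_{t\to\infty}t^{\kappa}\pi_F((t,\infty))=C_+>0$.

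Mean contractivity of both auxiliary systems is automatic since the Lipschitz constants of $F_n$ and $G_n$ are dominated by $M_n$ and $\Erw\log M<0$ is part of the conclusion of Proposition \ref{prop:IRT}. Thus $\pi_F$ and $\pi_G$ both satisfy the strong form \eqref{eq:very exact TI} with exact tail index $\kappa$, and the closing sentence of Theorem \ref{thm:IFS basic tail result} transfers this to $\Prob_\pi(d(x_0,X_0)\in\cdot)$. The only genuinely non-routine step I anticipate is the sandwich-at-$x_0$ observation used to bound $\uline{R}$ by $\ovl{R}$ and thus to control $\Erw\uline{R}^{\kappa}$; the rest is a direct appeal to the earlier propositions.
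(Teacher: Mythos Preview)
Your proposal is correct and follows essentially the same route as the paper: invoke Propositions \ref{prop:tails of perpetuities} and \ref{prop:tails of max-type SFPE} to get the exact tail index $\kappa$ for $\pi_G$ and $\pi_F$, then feed this into Theorem \ref{thm:IFS basic tail result}. Your sandwich-at-$x_0$ observation deriving $\uline{R}\le\ovl{R}$ a.s.\ (and hence $\Erw\uline{R}^{\kappa}<\infty$) is in fact a detail the paper's own proof passes over in silence, so your write-up is slightly more complete on that point.
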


\begin{proof}
By another appeal to Prop. \ref{prop:tails of perpetuities} and Prop. \ref{prop:tails of max-type SFPE}, we infer that the stationary laws $\pi_{F}$ and $\pi_{G}$ of the IFS pertaining to $F$ and $G$, respectively, both have exact tail index $\kappa$ which, by Thm. \ref{thm:IFS basic tail result}, is therefore also the tail index of $\pi$. Exactness finally follows because, by \eqref{eq:general iteration tail inequality},
\begin{align*}
0\ <\ \lim_{t\to\infty}t^{\kappa}\,\Prob(\wh{Y}_{\infty}>t)\ &\le\ \liminf_{t\to\infty}t^{\kappa}\,\Prob(d(x_{0},X_{0})>t)\\
&\le\ \limsup_{t\to\infty}t^{\kappa}\,\Prob(d(x_{0},X_{0})>t)\ \le\ \lim_{t\to\infty}t^{\kappa}\,\Prob(\wh{Z}_{\infty}>t)\ <\ \infty
\end{align*}
where $X_{0}\eqdist\pi,\,\wh{Y}_{\infty}\eqdist\pi_{F}$ and $\wh{Z}_{\infty}\eqdist\pi_{G}$.\qed
\end{proof}

\section{Examples}\label{sec:examples}

\subsection{The AR(1)-model with ARCH(1) errors}\label{subsec:AR-ARCH}

The following IFS, which belongs to a larger class of nonlinear time series models introduced by Engle \cite{Engle:82} and Weiss \cite{Weiss:84}, has received attention due to its relevance in Mathematical Finance where it is known as a relatively simple model that captures temporal variation of volatility in financial data sets (conditional heteroscedasticity). Known as the \emph{AR(1)-model with ARCH(1) errors}, it is defined by the recursion
\begin{align*}%\label{eq:AR(1)-ARCH(1) equation for Xn}
X_{n}\ =\ \alpha X_{n-1}+\left(\beta+\lambda X_{n-1}^{2}\right)^{1/2}\eps_{n},\quad n\ge 1,
\end{align*}
with $(\alpha,\beta,\lambda)\in\R\times\R_{>}^{2}$ being a parameter. The $\eps_{n}$, called innovations, are assumed to be independent of $X_{0}$ and further iid with a nontrivial \emph{symmetric} distribution. Regarding existence and tail behavior of the stationary distribution, a detailed study and relatively explicit results for the case $\alpha=0$ and standard normal $\eps_{n}$ (ARCH(1)-model with Gaussian noise) may be found in the monograph by Embrechts, Kl\"uppelberg and Mikosch \cite[Section 8.4, especially Thm. 8.4.9]{EmKlMi:97}. The more difficult general case was treated by Borkovec and Kl\"uppelberg \cite{BorkovecKl:01} who particularly provided, by a rather long and technical Tauberian-type argument [see their Section 4], the tail index of the stationary law under some extra conditions on the law of the $\eps_{n}$. Theorem \ref{thm:tail AR(1)-ARCH} below not only improves their result by showing that the tail index is actually exact, but is also obtained by much simpler means using our sandwich technique.

Let $\eps$ denote a generic copy of the $\eps_{n}$. If $(X_{n})_{n\ge 0}$ has a unique stationary law $\pi$, then any random variable $X$ with law $\pi$ and independent of $\eps$ satisfies the SFPE
\begin{equation}\label{eq:SFPE AR(1)-ARCH}
X\ \eqdist\ \Phi(X)\ :=\ \alpha X+\big(\beta+\lambda X^{2}\big)^{1/2}\eps
\end{equation}
and is symmetric, for $-X$ also solves \eqref{eq:SFPE AR(1)-ARCH}. Moreover, it then further follows that
\begin{equation*}
X\ \eqdist\ (-\alpha)(-X)+\big(\beta+\lambda (-X)^{2}\big)^{1/2}\eps\ \eqdist\ -\alpha X+\big(\beta+\lambda X^{2}\big)^{1/2}\eps,
\end{equation*}
whence it is no loss of generality to assume $\alpha\ge 0$. The symmetry of $X$ also allows us to study the tail of $W:=X^{2}$, for
$$ \Prob(X>t)\ =\ \frac{1}{2}\Prob(|X|>t)\ =\ \frac{1}{2}\,\Prob(W>t^{2}) $$
for all $t\ge 0$.

\vspace{.1cm}
It is not difficult to verify that $L(\Phi)=\alpha+\lambda^{1/2}|\eps|$ and then that the IFS $(X_{n})_{n\ge 0}$ is mean contractive and satisfies the jump-size condition \eqref{cond:jump-size} (with $x_{0}=0$) if
\begin{equation}\label{eq:contraction AR(1)-ARCH}
\Erw\log(\alpha+\lambda^{1/2}|\eps|)<0,
\end{equation}
in particular $\alpha<1$.
Assuming beyond symmetry that the law of $\eps$ has a continuous Lebesgue density and finite second moment and that its support is the whole real line, Borkovec and Kl\"uppelberg \cite[Thm.~1]{BorkovecKl:01} could actually show, by drawing on the theory of Harris recurrence, that $(X_{n})_{n\ge 0}$ has a unique stationary distribution already under the weaker condition
\begin{equation}\label{eq2:contraction AR(1)-ARCH}
\Erw\log|\alpha+\lambda^{1/2}\eps|<0.
\end{equation}
Here we contend ourselves with condition \eqref{eq:contraction AR(1)-ARCH}, but do not impose restictions of the afore-mentioned kind on the law of $\eps$.

\vspace{.1cm}
Note that $|X|$ is independent of the random variable $\sign(X)$ which in turn takes values $\pm 1$ with probability $1/2$ each. Hence $\eta:=\sign(X)\eps$ is a copy of $\eps$ independent of $|X|$ and thus of $W=|X|^{2}$. This in combination with \eqref{eq:SFPE AR(1)-ARCH} entails that 
\begin{align}
W\ &\eqdist\ (\alpha^{2}+\lambda\eps^{2})W+2\alpha\eps X(\beta+\lambda W)^{1/2}+\beta\eps^{2}\nonumber\\
&\eqdist\ (\alpha+\lambda^{1/2}\eta)^{2}\,W+2\alpha\eta W^{1/2}\Big((\beta+\lambda W)^{1/2}-(\lambda W)^{1/2}\Big)+\beta\eta^{2},\nonumber
\intertext{thus}
W\ &\eqdist\ \Psi(W)\ :=\ (\alpha+\lambda^{1/2}\eta)^{2}\,W+\frac{2\alpha\beta\eta W^{1/2}}{(\beta+\lambda W)^{1/2}+(\lambda W)^{1/2}}+\beta\eta^{2}.\label{eq:SFPE W}
\end{align}
The random Lipschitz function $\Psi$ is easily seen to satisfy the sandwich inequality
\begin{equation}\label{eq:sandwich AR(1)-ARCH}
\uline{R}+M\,t=:F(t)\ \le\ \Psi(t)\ \le\ G(t):=\ovl{R}+M\,t,\quad t\in\R_{\ge},
\end{equation}
where
\begin{align*}
&\uline{R}:=\beta\left(\eta^{2}-\alpha\lambda^{-1/2}\eta^{-}\right),\quad \ovl{R}:=\beta\left(\eta^{2}+\alpha\lambda^{-1/2}|\eta|\right)\\
\text{and}\quad &M:=(\alpha+\lambda^{1/2}\eta)^{2}.
\end{align*}
An application of Thm. \ref{thm:IFS exact TI} now leads to the following result.

\begin{Theorem}\label{thm:tail AR(1)-ARCH}
Given any $(\alpha,\beta,\lambda)\in\R\times\R_{>}^{2}$, assume \eqref{eq:contraction AR(1)-ARCH} and that $M$ as above satisfies (IRT-1)-(IRT-3) for some $\kappa>0$. Then the solution $\pi$ to the SFPE \eqref{eq:SFPE AR(1)-ARCH} is unique and
\begin{equation*}
\lim_{t\to\infty}t^{2\kappa}\,\Prob(X>t)\ =\ \frac{\Erw\big(\Psi(X^{2})^{\kappa}-(MX^{2})^{\kappa}\big)}{2\kappa\mu_{\kappa}}\ >\ 0,
\end{equation*}
where $\mu_{\kappa}:=\Erw M^{\kappa}\log M$ and $X\eqdist\pi$.
\end{Theorem}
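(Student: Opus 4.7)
The strategy is to reduce the tail of $X$ to that of $W:=X^{2}$ via symmetry, establish the exact tail index $\kappa$ of $\pi_{W}$ through Theorem \ref{thm:IFS exact TI} applied to the sandwich \eqref{eq:sandwich AR(1)-ARCH}, and then pin down the explicit constant by invoking Prop.~\ref{prop:IRT}(a) directly on the auxiliary SFPE \eqref{eq:SFPE W}. The genuine technical obstacle lies in the last step: since the exact tail index will be shown to equal $\kappa$, one has $\Erw W^{\kappa}=\infty$, so crude bounds cannot verify Goldie's integrability condition \eqref{eq:IRT cond1-psi} and the cancellation built into $\Psi(W)-MW$ must be exploited.

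First, I would verify uniqueness of $\pi$ and set up the reduction. The Lipschitz constant of $\Phi$ is $L(\Phi)=\alpha+\lambda^{1/2}|\eps|$, so \eqref{eq:contraction AR(1)-ARCH} gives mean contraction. From $\Erw M^{\kappa}=1$ together with $M\ge\tfrac12\lambda\eta^{2}$ for $|\eta|$ sufficiently large one extracts $\Erw|\eps|^{2\kappa}<\infty$, which far exceeds the jump-size condition \eqref{cond:jump-size} at $x_{0}=0$, namely $\Erw\log^{+}\!\bigl(\beta^{1/2}|\eps|\bigr)<\infty$. Elton's theorem then yields the unique stationary $\pi$ and the SFPE \eqref{eq:SFPE AR(1)-ARCH}. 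By symmetry, $\Prob(X>t)=\tfrac12\Prob(W>t^{2})$, so the claim reduces to
\[
\lim_{s\to\infty}s^{\kappa}\Prob(W>s)\ =\ \frac{\Erw\bigl(\Psi(W)^{\kappa}-(MW)^{\kappa}\bigr)}{\kappa\mu_{\kappa}}\ >\ 0.
\]

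Next I would apply Theorem \ref{thm:IFS exact TI} (together with Prop.~\ref{prop:tails of perpetuities} and Prop.~\ref{prop:Guivarch perpetuity}) to the sandwich \eqref{eq:sandwich AR(1)-ARCH}. Both bounding affine maps $F(t)=\uline R+Mt$ and $G(t)=\ovl R+Mt$ share the same multiplier $M$ satisfying (IRT-1)--(IRT-3), and the integrability $\Erw|\uline R|^{\kappa}\vee\Erw\ovl R^{\kappa}<\infty$ follows from $|\uline R|\vee\ovl R\le C(\eta^{2}+|\eta|)$ combined with $\Erw|\eta|^{2\kappa}<\infty$. Positivity of the corresponding tail constants is ensured by $\Prob(\eta\ne 0)=1$ (since the law of $\eps$ is nontrivial and symmetric) via Prop.~\ref{prop:Guivarch perpetuity}. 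Invoking \eqref{eq:general iteration tail inequality} forces $\pi_{W}$ to have exact tail index $\kappa$; in particular, $\Erw W^{p}<\infty$ for every $p\in(0,\kappa)$.

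Finally, to evaluate the constant I would apply Prop.~\ref{prop:IRT}(a) directly to \eqref{eq:SFPE W}, noting that $M\ge 0$ and that $W\ge 0$ is independent of $\eta$ (hence of $M$) in the stationary SFPE. The sandwich \eqref{eq:sandwich AR(1)-ARCH} gives $|\Psi(W)-MW|\le C(\eta^{2}+|\eta|)$ a.s., uniformly in $W$. Combining this with the elementary estimates $|a^{\kappa}-b^{\kappa}|\le|a-b|^{\kappa}$ for $\kappa\le 1$ and $|a^{\kappa}-b^{\kappa}|\le\kappa(a\vee b)^{\kappa-1}|a-b|$ for $\kappa>1$, the verification of \eqref{eq:IRT cond1-psi} reduces in the harder case $\kappa>1$ to the finiteness of $\Erw M^{\kappa-1}(\eta^{2}+|\eta|)\cdot\Erw W^{\kappa-1}$; the first factor is controlled by (IRT-2) together with $M\asymp\lambda\eta^{2}$ at infinity, and the second is finite by the previous step since $\kappa-1<\kappa$. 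Prop.~\ref{prop:IRT}(a) then delivers the limit for $s^{\kappa}\Prob(W>s)$, and the symmetry relation $\Prob(X>t)=\tfrac12\Prob(W>t^{2})$, applied with $s=t^{2}$, produces precisely the claimed formula with the factor $2\kappa\mu_{\kappa}$ in the denominator. Positivity of the limit is inherited from the lower sandwich bound.
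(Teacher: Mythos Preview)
Your overall strategy---reduce to $W=X^{2}$, sandwich $\Psi$ between two affine maps sharing the multiplier $M$, deduce the exact tail index from Thm.~\ref{thm:IFS basic tail result}, then read off the constant via Prop.~\ref{prop:IRT}---is exactly the paper's approach, and your verification of the moment condition \eqref{eq:IRT cond1-psi} is a legitimate variant of the paper's estimate. Two points, however, need repair.

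First, a minor misreference: Thm.~\ref{thm:IFS exact TI} is stated for a \emph{max-type} lower bound $F(t)=\uline R\vee(Mt\,\1_{(r,\infty)}(t))$ with $\uline R\ge 0$, whereas here $F(t)=\uline R+Mt$ is affine and $\uline R$ may take negative values. What you actually need (and what the paper uses) is Thm.~\ref{thm:IFS basic tail result} together with Prop.~\ref{prop:tails of perpetuities} applied to the affine $F$ on $I=\R$.

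Second, and more substantively, your positivity argument has a gap. The assertion $\Prob(\eta\ne 0)=1$ does not follow from the law of $\eps$ being nontrivial and symmetric (an atom at~$0$ is permitted), and even if it held it would not by itself establish the hypothesis \eqref{eq:no invariante halfline RDE} of Prop.~\ref{prop:Guivarch perpetuity}, namely $\Prob(Mc+\uline R>c)>0$ for every $c\in\R$. Since $\uline R=\beta(\eta^{2}-\alpha\lambda^{-1/2}\eta^{-})$ is negative on part of the range of $\eta$, this requires a genuine argument. The paper verifies the stronger sufficient condition \eqref{eq2:no invariante halfline RDE} by exploiting (IRT-1), (IRT-3) and the symmetry of $\eta$: from $\Prob(M<1)>0$ and $\Prob(M>1)>0$ one extracts $\Prob(0<\eta<(1-\alpha)\lambda^{-1/2})>0$ and $\Prob(\eta>(1-\alpha)\lambda^{-1/2})>0$, and on both events $\uline R>0$. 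You should supply this step.
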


\begin{proof}
W.l.o.g. assume $\alpha\ge 0$. If \eqref{eq:contraction AR(1)-ARCH} holds, then the jump-size condition 
$$ \Erw\log^{+}|\Phi(0)|=\Erw\log^{+}\beta^{1/2}|\eps|<\infty $$
is easily seen to be valid as well. Consequently, by Elton's theorem, the IFS $(X_{n})_{n\ge 0}$ is mean contractive with unique stationary distribution $\pi$ and the backward iterations $\wh{X}_{n}$ converge a.s. Put $W_{n}:=\wh{X}_{n}^{2}$ with $\wh{X}_{0}\eqdist\pi$ and $W=\lim_{n\to\infty}W_{n}$.

\vspace{.1cm}
Next, if $M$ satisfies (IRT-1)-(IRT-3), then $\Erw|\eta|^{2\kappa}<\infty$ and $\Erw|\uline{R}|^{\kappa}\le\Erw\ovl{R}^{\kappa}<\infty$. Moreover, the IFS generated by $F$ and $G$ are mean contractive. Denote by $\pi_{F},\,\pi_{G}$ their stationary distributions, respectively, and by $Y,Z$ their a.s. backward iteration limits. Then Thm.~\ref{thm:IFS basic tail result} ensures that $Y\le W\le Z$ a.s.

\vspace{.1cm}
Since $\Erw|\uline{R}|^{\kappa}<\infty$ and $\Erw\ovl{R}^{\kappa}<\infty$,
we also have $\Erw|Y|^{p}<\infty$ and $\Erw W^{p}\le\Erw Z^{p}<\infty$ for any $p\in (0,\kappa)$, see Prop.~\ref{prop:tails of perpetuities}. Therefore
\begin{align}\label{eq:moment ARCH}
\Erw\big(\Psi(W)^{\kappa}-(MW)^{\kappa}\big)\ \le\ \Erw\big((MW+\ovl{R})^{\kappa}-(MW)^{\kappa}\big)\ <\ \infty.
\end{align}
for the last expectation is bounded by $\Erw\ovl{R}^{\kappa}$ if $\kappa\in (0,1]$ (subadditivity), and by
$\Erw\ovl{R}^{\kappa}$ plus a constant times
\begin{align*}%\label{eq2:moment ARCH}
\Erw W^{\kappa-1}\,\Erw\ovl{R}\ +\ \Erw W\,\Erw\ovl{R}^{\kappa-1}\ <\ \infty
\end{align*}
if $\kappa>1$. For the last estimate, we have used that, for $x,y\ge 0$,
\begin{align*}
(x+y)^{\kappa}-x^{\kappa}\ &\le\ y^{\kappa}+\kappa 2^{\kappa-1}(x^{\kappa-1}y+xy^{\kappa-1}),
\end{align*}
see \cite[p. 282]{Gut:74} and also \cite[(9.27)]{Goldie:91} for a similar estimate.

By invoking once again Thm.~\ref{thm:IFS basic tail result} in combination with Prop.~\ref{prop:IRT}, we now infer
\begin{align*}
\lim_{t\to\infty}2t^{2\kappa}\,\Prob(X>t)\ &=\ 
\lim_{t\to\infty}t^{\kappa}\,\Prob(W>t)\ =\ \frac{\Erw\big(\Psi(W)^{\kappa}-(MW)^{\kappa}\big)}{\kappa\mu_{\kappa}}\\
&\ge\ \lim_{t\to\infty}t^{2\kappa}\,\Prob(Y>t)\ =\ \frac{\Erw\big((MY+\uline{R})^{\kappa}-(MY)^{\kappa}\big)}{\kappa\mu_{\kappa}}
\end{align*}
so that we must finally verify that the last expectation is positive.

\vspace{.1cm}
To this end note that $\Prob(M=1)<1$ and $\Erw M^{\kappa}=1$ imply
\begin{align*}
&0\ <\ \Prob(M<1)\ \le\ \Prob(-(1+\alpha)\lambda^{-1/2}<\eta<(1-\alpha)\lambda^{-1/2})\\
\text{and}\quad&0\ <\ \Prob(M>1)\ \le\ \Prob(\eta<-(1+\alpha)\lambda^{-1/2}\text{ or }\eta>(1-\alpha)\lambda^{-1/2})
\end{align*}
and therefore (using the symmetry of $\eta$)
\begin{align*}
&\Prob(M\le 1,\,\uline{R}>0)\ \ge\ \Prob(0<\eta<(1-\alpha)\lambda^{-1/2})\ >\ 0
\intertext{as well as}
&\Prob(M\ge 1,\,\uline{R}>0)\ \ge\ \Prob(\eta>(1-\alpha)\lambda^{-1/2})\ >\ 0.
\end{align*}
Hence condition \eqref{eq2:no invariante halfline RDE} holds for the pair $(M,\uline{R})$ and we arrive at the desired conclusion by Prop.~\ref{prop:Guivarch perpetuity}.\qed
\end{proof}

\subsection{Random logistic transforms}\label{subsec:logistic}

A \emph{random logistic transform} is given by the Lipschitz function
$$ \Phi(x)\ :=\ \xi^{-1}\,x(1-x),\quad x\in [0,1], $$
where $\xi$ denotes a random variable taking values in $[1/4,\infty)$, where the last restriction is necessary to ensure $\Phi([0,1])\subset [0,1]$. An IFS generated by iid copies of $\Phi$, that is
\begin{equation*}
X_{n}\ =\ \Phi_{n}(X_{n-1})\ =\ \xi_{n}^{-1}X_{n-1}(1-X_{n-1}),\quad n\ge 1,
\end{equation*} 
has been studied in a series of papers of which we mention those by Athreya and Dai \cite{AthreyaDai:00,AthreyaDai:02}, Dai \cite{Dai:00}, Athreya and Schuh \cite{AthreyaSchuh:03}, Steinsaltz \cite{Steinsaltz:01} and the survey by Athreya and Bhattacharya \cite{AthreyaBhatt:01}. The contractive case, which occurs if $\Erw\log\xi>0$, is rather uninteresting here because it results in the trivial stationary distribution $\delta_{0}$. As shown in \cite[Thm. 5]{AthreyaDai:00}, the same along with $X_{n}\stackrel{\Prob}{\to} 0$ holds true when $\Erw\log\xi=0$ (called critical case), where $\stackrel{\Prob}{\to}$ means convergence in probability. In fact, $\delta_{0}$ is always stationary because $\Phi(0)=0$ for any realization of $\xi$. On the other hand, if
\begin{equation}\label{eq:extra moment at 4}
-\infty<\Erw\log\xi<0\quad\text{and}\quad\Erw|\log(4\xi-1)|<\infty,
\end{equation}
there exists also a stationary distribution $\pi$ on the open interval $(0,1)$ which is unique if $(X_{n})_{n\ge 0}$ is Harris irreducible on $(0,1)$, see \cite[Thms. 2 and 6]{AthreyaDai:00}. It is then natural to ask about the behavior of $\pi$ at $0$, more precisely, of $\pi((0,x))$ as $x\downarrow 0$. The following considerations will show how this may be accomplished within our framework under additional conditions on $\xi$.
%We make the standing assumptions that 

\vspace{.1cm}
After conjugation with $x\mapsto x^{-1}$, the IFS $(X_{n})_{n\ge 0}$ turns into the IFS $(W_{n})_{n\ge 0}$, defined by the recursion
\begin{equation*}
W_{n}\ =\ \Psi_{n}(W_{n-1})\ :=\ \frac{1}{\Phi_{n}(1/W_{n-1})}\ =\ \xi_{n}\,\left(W_{n-1}+1+\frac{1}{W_{n-1}-1}\right)
\end{equation*}
for $n\ge 1$ and with state space $\X=(1,\infty)$. It has stationary distribution $\wh{\pi}$, given by
\begin{equation}\label{eq:def pihat}
\wh{\pi}((x,\infty))\ :=\ \pi((0,1/x)),\quad x>1.
\end{equation}
In order to study its tail behavior, we first consider the simpler case when $4\xi$ stays bounded away from 1, the result being summarized in the next theorem.

\begin{Theorem}\label{thm:tail logistic transform}
Suppose that $\xi$ satisfies \eqref{eq:extra moment at 4} and (IRT-1)-(IRT-3) for some $\kappa>0$. Then the following assertions hold true for any stationary distribution $\pi$ of $(X_{n})_{n\ge 0}$ on $(0,1)$:
\begin{description}[(c)]
\item[(a)] $\wh{\pi}$ has upper tail index $\kappa$. In fact,
\begin{equation}\label{eq:upper tail index pihat}
\liminf_{x\to\infty}x^{\kappa}\,\wh{\pi}((x,\infty))\ =\ \liminf_{x\downarrow 0}\,x^{-\kappa}\pi((0,x))\ \in\ \R_{>}\cup\{\infty\}.
\end{equation}
\item[(b)] If $4\xi\ge a\in (1,4)$ a.s., then $\pi((0,1/a])=1$ and
\begin{equation}\label{eq:exact tail index pihat}
\lim_{x\downarrow 0}\,x^{-\kappa}\pi((0,x))\ =\ \frac{\Erw\big(\Phi(X)^{-\kappa}-(X/\xi)^{-\kappa}\big)}{\kappa\mu_{\kappa}}\ \in\ \R_{>},
\end{equation}
where $X$ is independent of $\Phi$ with $X\eqdist\pi$ and $\mu_{\kappa}=\Erw\xi^{\kappa}\log\xi$. In particular, $\wh{\pi}$ has exact tail index $\kappa$.
\item[(c)] If $a\ge 1+2^{-1/2}$, or if $a<1+2^{-1/2}$ and $\Erw\log\xi+\log|1-(1-a)^{-2}|<0$ in (b), then $\pi$ is unique.
\end{description}
\end{Theorem}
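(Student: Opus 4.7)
The plan is to reduce all three parts to results already established by working with the conjugate IFS $(W_{n})_{n\ge 0}$ on $(1,\infty)$ given by $W_{n}:=1/X_{n}$ and generator
$$ \Psi(w)\ =\ \xi\left(w+1+\frac{1}{w-1}\right)\ =\ \frac{\xi w^{2}}{w-1}, $$
and to use reference point $x_{0}=0$ so that $d(0,w)=w$. The goal is to exhibit explicit sandwich functions $F$ and $G$ of the types handled in Subsections \ref{subsec:perp} and \ref{subsec:Lindley}, and then invoke Thms.~\ref{thm:IFS basic tail result} and \ref{thm:IFS exact TI}. Note that any stationary law $\pi$ of $(X_{n})$ on $(0,1)$ corresponds bijectively to a stationary law $\wh{\pi}$ of $(W_{n})$ via $\wh{\pi}((x,\infty))=\pi((0,1/x))$, so the tail of $\wh{\pi}$ at $+\infty$ encodes the behaviour of $\pi$ near $0$.

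For part (a), the key observation is that $\Psi(w)\ge\xi w$ for every $w>1$. Setting $F(t):=\uline{R}\vee(M\,t\,\1_{(1,\infty)}(t))$ with $M=\uline{R}=\xi$ and $r=1$ provides a lower sandwich in the sense of Lemma \ref{lem:basic lemma}. Since (IRT-1) and (IRT-3) force $\Prob(\xi>1)>0$ (else $\Erw\xi^{\kappa}<1$), Prop.~\ref{prop:tails of max-type SFPE} yields that the $F$-IFS has a unique stationary law $\pi_{F}$ with exact tail index $\kappa$ and strictly positive constant. The lower-bound part of Thm.~\ref{thm:IFS basic tail result} (cf.~Rem.~\ref{rem2:basic lemma}) then gives $\Prob(\wh{Y}_{\infty}>t)\le\wh{\pi}((t,\infty))$, and after the change of variables $x=1/t$ this is precisely \eqref{eq:upper tail index pihat}.

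For part (b), since $\Phi(x)\le 1/(4\xi)\le 1/a$, the state space collapses after one step to $[0,1/a]$, forcing $\pi((0,1/a])=1$ and confining $(W_{n})$ to $[a,\infty)$. On this set the identity $\Psi(w)-\xi w=\xi w/(w-1)$ is bounded above by $\xi a/(a-1)$, so $G(t):=\xi t+\xi a/(a-1)$ provides an upper sandwich with $\Erw\ovl{R}^{\kappa}=(a/(a-1))^{\kappa}<\infty$. Applying Prop.~\ref{prop:tails of perpetuities} to the $G$-IFS and combining with the $F$-sandwich from part (a), Thm.~\ref{thm:IFS exact TI} immediately yields that $\wh{\pi}$ has exact tail index $\kappa$. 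For the precise constant, apply Goldie's Thm.~\ref{prop:IRT} directly to $\Psi$ at $W\eqdist\wh{\pi}$: the moment condition \eqref{eq:IRT cond1-psi} follows from the pointwise estimate $|\Psi(w)-\xi w|\le\xi a/(a-1)$, which for $\kappa\le 1$ gives a finite bound by subadditivity, and for $\kappa>1$ by the inequality $(x+y)^{\kappa}-x^{\kappa}\le y^{\kappa}+\kappa 2^{\kappa-1}(x^{\kappa-1}y+xy^{\kappa-1})$ combined with $\Erw W^{p}<\infty$ for all $p<\kappa$ (inherited from the $G$-perpetuity via Prop.~\ref{prop:tails of perpetuities}), exactly as in the proof of Thm.~\ref{thm:tail AR(1)-ARCH}. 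Translating $\Psi(W)=\Phi(X)^{-1}$ and $\xi W=(X/\xi)^{-1}$ yields the stated formula, with positivity inherited from the lower sandwich in (a).

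For part (c), it suffices to establish that $(W_{n})$ is mean contractive on $[a,\infty)$, for then Elton's theorem produces a unique stationary law on $[a,\infty)$, hence a unique $\pi$ on $(0,1/a]$ and, since every nontrivial $\pi$ on $(0,1)$ is concentrated there, on $(0,1)$ as well. A direct differentiation gives $\Psi'(w)=\xi(1-(w-1)^{-2})$, whose modulus on $[a,\infty)$ is maximised either as $w\to\infty$ (giving $\xi$) or at $w=a$ (giving $\xi((a-1)^{-2}-1)=\xi|1-(1-a)^{-2}|$); the two candidates coincide exactly at $a=1+2^{-1/2}$, so $L(\Psi)=\xi$ for $a\ge 1+2^{-1/2}$ and $L(\Psi)=\xi|1-(1-a)^{-2}|$ otherwise. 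The hypothesis of (c) is therefore exactly what is needed to guarantee $\Erw\log L(\Psi)<0$, and the jump-size condition \eqref{cond:jump-size} holds because $\Erw\log^{+}\xi<\infty$ is implied by $\Erw\xi^{\kappa}=1$. The main technical obstacle is the verification of the Goldie moment condition in part (b) when $\kappa>1$, which requires passing finite moments from the dominating $G$-perpetuity to $W$ itself; everything else reduces to bookkeeping once the sandwich functions are chosen.
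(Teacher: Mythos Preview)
Your proof is correct and follows essentially the same route as the paper. The only difference worth noting is in part (a): the paper uses the tighter affine lower bound $F(x)=\xi(x+1)$ and invokes Prop.~\ref{prop:tails of perpetuities}, whereas you use the weaker $F(t)=\xi\vee(\xi t\,\1_{(1,\infty)}(t))$ and invoke Prop.~\ref{prop:tails of max-type SFPE}; both choices work and yield the same conclusion. Parts (b) and (c), including the upper sandwich $G(t)=\xi t+\xi a/(a-1)$, the moment-splitting argument for \eqref{eq:IRT cond1-psi}, and the Lipschitz computation on $[a,\infty)$, match the paper's argument.
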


We note in passing that $a<4$ has been asumed, for otherwise $4\xi\ge a\ge 4$ a.s. would entail $\Erw\log\xi\ge 0$ and thus violation of \eqref{eq:extra moment at 4}.

\begin{proof}
(a) Obviously, $\Psi(x)\ge\xi(x+1)=:F(x)$, and the IFS generated by $F$ is mean contractive with unique stationary law $\pi_{F}$ satisfying
\begin{equation}\label{eq:pi_F tail}
\lim_{t\to\infty}t^{\kappa}\,\Prob(Y>t)\ >\ 0\quad\text{if }Y\eqdist\pi_{F}
\end{equation}
by Prop.~\ref{prop:tails of perpetuities} and the subsequent remark. Now use $F_{1:n}(x)\le\Psi_{1:n}(x)$ for all $x>1$ and $n\ge 1$ (Lemma \ref{lem:basic lemma}), $\Psi_{1:n}(W)\eqdist\wh{\pi}$ and $F_{1:n}(W)\to\wh{Y}_{\infty}\eqdist Y$ to infer
\begin{align*}
\wh{\pi}((t,\infty))\ =\ \Prob(W>t)\ =\ \Prob(\Psi_{1:n}(W)>t)\ \ge\ \Prob(F_{1:n}(W)>t)\ \stackrel{n\to\infty}{\longrightarrow}\ \Prob(Y>t)
\end{align*}
for all $t>1$ and thereupon \eqref{eq:upper tail index pihat}.

\vspace{.1cm}
(b) If $4\xi\ge a\in (1,4)$, then $\Psi(x)\ge\Psi(2)=4\xi\ge a$ a.s. for all $x>1$ and thus $[a,\infty)$ is an absorbing set for the IFS $(W_{n})_{n\ge 0}$ generated by $\Psi$. In particular, $\wh{\pi}([a,\infty))=\pi((0,1/a])=1$ for any stationary law $\pi$. Moreover, the sandwich inequality $F(x)\le\Psi(x)\le G(x)$ holds true a.s. on $[a,\infty)$, where $F$ is as in (a) and 
$$ G(x)\ :=\ \xi\left(x+1+\frac{1}{a-1}\right). $$
Since the IFS generated by $G$ is clearly also mean contractive with unique stationary law $\pi_{G}$ and satisfies
$$ \lim_{t\to\infty}t^{\kappa}\,\Prob(Z>t)\ >\ 0 $$
when $Z\eqdist\pi_{G}$, we infer from Thm.~\ref{thm:IFS basic tail result} that $\wh{\pi}$ has exact tail index $\kappa$. 

Now let $X\eqdist\pi$ be independent of $\Phi$, thus $X\eqdist\Phi(X)$, $W:=1/X\eqdist\wh{\pi}$ and $W\eqdist\Psi(W)$. Observe that
$$ \Phi(X)^{-\kappa}-(X/\xi)^{-\kappa}\ =\ \Psi(W)^{\kappa}-(\xi W)^{\kappa}\ >\ 0. $$
By similar arguments as in the previous example (see around \eqref{eq:moment ARCH}), we find that
\begin{align*}
0\ <\ \Erw\big(\Psi(W)^{\kappa}-(\xi W)^{\kappa}\big)\ \le\ \Erw\big(G(W)^{\kappa}-(\xi W)^{\kappa}\big)\ <\ \infty
\end{align*}
because $\Erw\xi^{\kappa}<\infty$ and $\Erw W^{p}\le\Erw Z^{p}<\infty$ for any $p\in (0,\kappa)$. Hence \eqref{eq:exact tail index pihat} follows by an appeal to Prop.~\ref{prop:IRT}.

\vspace{.1cm}
(c) For general $a\in (1,4)$, we note that the Lipschitz constant $L(\Psi)$ on $\X=[a,\infty)$ is given by the maximum of $|\Psi'(a)|=\xi|1-(a-1)^{-2}|$ and $\Psi'(\infty)=\xi$ by the convexity of $\Psi$. It equals the first of these values iff $a\in (1,1+2^{-1/2}]$. Therefore, mean contractivity of $(W_{n})_{n\ge 0}$ on $[a,\infty)$ holds iff $a\ge 1+2^{-1/2}$, or $a<1+2^{-1/2}$ and $\Erw\log\xi+\log|1-(1-a)^{-2}|<0$.\qed
%If $a\ge 2$ in (b), then the IFS $(W_{n})_{n\ge 0}$ is easily seen to be mean contractive on its absorbing set $[a,\infty)$ and thus possesses a unique stationary distribution. For general $a\in (1,4)$ we note that the Lipschitz constant $L(\Psi)$ on $\X=[a,\infty)$ is given by the maximum of $|\Psi'(a)|=\xi|1-(a-1)^{-2}|$ and $\Psi'(\infty)=\xi$ by the convexity of $\Psi$. It equals the first of these values iff $a\in (1,2]$. Therefore, mean contractivity of $(W_{n})_{n\ge 0}$ on $[a,\infty)$ for such $a$ holds iff $\Erw\log\xi+\log|1-(1-a)^{-2}|<0$.\qed
\end{proof}

Turning to the general case, we will show that \eqref{eq:exact tail index pihat} remains valid under an extra moment condition which controls the behavior of $\xi$ at its lower bound $1/4$. For the proof, furnished by two subsequent lemmata, the basic and rather standard idea is to first consider an embedded IFS.

\begin{Theorem}\label{thm2:tail logistic transform}
Suppose that $\xi$ satisfies \eqref{eq:extra moment at 4}, (IRT-1)-(IRT-3) for some $\kappa>0$ and
\begin{equation}\label{eq:extra moment at 4 kappa}
\Erw(4\xi-1)^{-\kappa}<\infty.
\end{equation}
Then any stationary distribution $\pi$ of $(X_{n})_{n\ge 0}$ on $(0,1)$ satisfies \eqref{eq:exact tail index pihat}. In particular, $\wh{\pi}$ has exact tail index $\kappa$.
\end{Theorem}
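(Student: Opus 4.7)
The strategy is to apply Goldie's implicit renewal theorem (Prop.~\ref{prop:IRT}) directly to the SFPE
\[ W\ \eqdist\ \Psi(W)\ =\ \xi W + \xi\left(1+\frac{1}{W-1}\right),\qquad W\eqdist\wh\pi, \]
with linear part $M=\xi$. The global sandwich used in Theorem \ref{thm:tail logistic transform}(b) is unavailable because $\xi(1+1/(W-1))$ is unbounded as $W\downarrow 1$, and under \eqref{eq:extra moment at 4 kappa} one cannot force the chain into an interval $[a,\infty)$ with $a>1$. The new assumption \eqref{eq:extra moment at 4 kappa} must instead enter through an \emph{integrated} control of the boundary behavior of $\wh\pi$ at $1$.

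The first of the two announced lemmata should be an inverse-moment bound. Since $\min_{x>1}\Psi_{\xi}(x)=\Psi_{\xi}(2)=4\xi$, and since in stationarity $W\eqdist\Psi_{\xi}(W')$ for an independent copy $W'$ of $W$ independent of $\xi$, the joint law of $(W,\xi)$ admits a realization with $W-1\ge 4\xi-1$ a.s., whence
\[ \Erw(W-1)^{-\kappa}\ \le\ \Erw(4\xi-1)^{-\kappa}\ <\ \infty. \]
The second lemma should bootstrap this to the Goldie moment condition $\Erw|\Psi(W)^{\kappa}-(\xi W)^{\kappa}|<\infty$, using independence of $\xi$ and $W$, the upper perpetuity sandwich $\Psi(x)\le\xi(x+c)$ valid on any $[a,\infty)$ (which, via Prop.~\ref{prop:tails of perpetuities}, yields $\Erw W^{p}<\infty$ for all $p\in(0,\kappa)$ on the ``good'' part of the state space), and the algebraic expansion $(x+y)^{\kappa}-x^{\kappa}\le y^{\kappa}+\kappa 2^{\kappa-1}(x^{\kappa-1}y+xy^{\kappa-1})$ already invoked in the proof of Theorem \ref{thm:tail AR(1)-ARCH}. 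The embedded-IFS device hinted at in the paper presumably organizes this bookkeeping by observing the chain along the (a.s.\ infinite) subsequence of times when $\xi_{n}$ is bounded away from $1/4$; on this sub-chain one recovers the clean sandwich of Theorem \ref{thm:tail logistic transform}(b), which simultaneously produces the $p$-th moment bounds on $W$ and the inverse moment bound without ever having to marry them on the same $\xi$.

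Once the Goldie moment condition is in place, Prop.~\ref{prop:IRT}(a) delivers the formula \eqref{eq:exact tail index pihat}, and positivity of the constant follows exactly as in Theorem \ref{thm:tail logistic transform}(a): the comparison $\Psi(x)\ge\xi(x+1)$ lower-sandwiches $W$ by a perpetuity with positive exact tail index $\kappa$ (Prop.~\ref{prop:tails of perpetuities}), and Prop.~\ref{prop:Guivarch perpetuity} together with the symmetry of $\xi$ about $1$ ensures that no invariant half-line is present. The main obstacle I anticipate is the case $\kappa>1$: the separate bounds $\Erw(W-1)^{-\kappa}<\infty$ and $\Erw W^{p}<\infty$ for $p<\kappa$ do not combine to give $\Erw W^{\kappa-1}/(W-1)<\infty$ by any Hölder-type argument, so the joint integrability needed for the cross terms in the expansion must be extracted structurally. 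This is most naturally done by iterating the recursion one step so that $(W-1)^{-1}$ involves a $\xi$ that is independent of the high power of $W$ it multiplies, and this is precisely the role that the embedded IFS is expected to play.
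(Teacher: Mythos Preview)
Your proposal has the right cast of characters but misassigns their roles, and it misidentifies where the real difficulty lies.

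\textbf{The perceived obstacle is not one.} Once you know $\Erw W^{p}<\infty$ for all $p<\kappa$ \emph{and} $\Erw(W-1)^{-\kappa}<\infty$, the Goldie condition $\Erw\bigl(\Psi(W)^{\kappa}-(\xi W)^{\kappa}\bigr)<\infty$ follows by a simple state-space split at $W=2$, not by H\"older. On $\{W\ge 2\}$ one has $(W-1)^{-1}\le 1$, so all cross terms from the expansion $(x+y)^{\kappa}-x^{\kappa}\le y^{\kappa}+\kappa 2^{\kappa-1}(x^{\kappa-1}y+xy^{\kappa-1})$ reduce to powers $W^{q}$ with $q<\kappa$. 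On $\{1<W<2\}$ the factor $W^{\kappa-1}$ is bounded and $(W-1)^{-q}\le(W-1)^{-\kappa}$ for any $0<q\le\kappa$. No structural trick is needed here; the paper simply writes ``it is finally not difficult to conclude''.

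\textbf{The actual gap is upstream.} Your route to $\Erw W^{p}<\infty$ is the upper sandwich $\Psi(x)\le\xi(x+c)$ on $[a,\infty)$, but without the hypothesis $4\xi\ge a$ of Theorem~\ref{thm:tail logistic transform}(b) the chain does not stay in $[a,\infty)$, so this sandwich is not globally valid and Theorem~\ref{thm:IFS basic tail result} does not apply. The embedded chain at the stopping times $\sigma_{n}=\inf\{k>\sigma_{n-1}:\xi_{k}\ge 1/2\}$ is not an organizational convenience; it is the mechanism that produces the missing upper tail bound. The two lemmata are: (i) a comparison $\wh{\pi}^{*}((t,\infty))\ge\wh{\pi}((2t,\infty))$ between the stationary law $\wh{\pi}^{*}$ of the embedded chain on $[2,\infty)$ and $\wh{\pi}$; and (ii) the statement that $\wh{\pi}^{*}$ has exact tail index $\kappa$, obtained by sandwiching the iterated map $\Psi_{\sigma:1}$ between $\Pi_{\sigma}x\vee Q_{1}$ and $\Pi_{\sigma}x+Q_{1}+Q_{2}$ with $Q_{2}$ built from the terms $\xi_{n}/(4\xi_{n-1}-1)$. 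Condition~\eqref{eq:extra moment at 4 kappa} enters precisely here, to secure $\Erw Q_{2}^{\kappa}<\infty$ (via a stopped-sum argument for a $1$-dependent sequence), and one must also check that $\Pi_{\sigma}$ inherits (IRT-1)--(IRT-3). Only after (i) and (ii) give $\limsup_{x\to\infty}x^{\kappa}\wh{\pi}((x,\infty))<\infty$, which together with part~(a) yields the exact tail index and thus $\Erw W^{p}<\infty$, does your inverse-moment bound plus the Goldie expansion finish the proof.

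Finally, positivity of the constant in \eqref{eq:exact tail index pihat} needs neither Prop.~\ref{prop:Guivarch perpetuity} nor any symmetry of $\xi$: the integrand $\Psi(W)^{\kappa}-(\xi W)^{\kappa}$ is strictly positive a.s.
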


Let $\sigma_{0}:=0$ and $\sigma_{n}:=\inf\{k>\sigma_{n-1}:\xi_{k}\ge 1/2\}$ for $n\ge 1$.
Let further $(W_{n}^{*})_{n\ge 0}$ be the IFS on $[2,\infty)$ generated by $\Psi_{\sigma:1}$, where $\sigma:=\sigma_{1}$. Thus $W_{n}^{*}=\Psi_{\sigma_{n}:1}(W_{0}^{*})$. The following lemma shows that it has a stationary law $\wh{\pi}^{*}$, say, with lower tail index at least as big as $\wh{\pi}$.

\begin{Lemma}\label{lem:RLT geom sampling}
For all $t>1$, $\wh{\pi}^{*}((t,\infty))\ge\wh{\pi}((2t,\infty))$.
\end{Lemma}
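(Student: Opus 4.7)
The plan is to construct an explicit stationary law $\wh{\pi}^{*}$ of the embedded chain $(W_{n}^{*})_{n\ge 0}$ and then compare its tail with that of $\wh{\pi}$ directly. Set $p:=\Prob(\xi\ge 1/2)$, necessarily positive for $\sigma_{1}$ to be a.s.\ finite, and $f(x):=x+1+1/(x-1)$, so that $\Psi(x)=\xi f(x)$. Let $W\eqdist\wh{\pi}$ and let $\xi^{+}$ be an independent random variable with law $\Prob(\xi\in\cdot\mid\xi\ge 1/2)$. The proposed candidate is the law of
$$ V\ :=\ \xi^{+}f(W), $$
automatically supported in $[2,\infty)$ since $f\ge 4$ on $(1,\infty)$ and $\xi^{+}\ge 1/2$.

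To verify that $\cL(V)$ is $P^{*}$-invariant, where $P^{*}(x,\cdot):=\Prob(\Psi_{\sigma:1}(x)\in\cdot)$ is the embedded kernel, I would decompose the original kernel as $P=(1-p)P^{-}+pP^{+}$ with $P^{\pm}$ the sub-kernels of $\Psi$ conditional on $\xi<1/2$ resp.\ $\xi\ge 1/2$. Stationarity $\wh{\pi}P=\wh{\pi}$ combined with $\cL(V)=\wh{\pi}P^{+}$ gives $\wh{\pi}=p\,\cL(V)+(1-p)\wh{\pi}P^{-}$; iterating this identity yields
$$ \wh{\pi}\ =\ p\sum_{k\ge 0}(1-p)^{k}\cL(V)(P^{-})^{k}, $$
the remainder $(1-p)^{n}\wh{\pi}(P^{-})^{n}$ being a probability measure of total mass $(1-p)^{n}\to 0$. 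The right-hand side is precisely the unconditional law of $Y_{\sigma-1}$, the state arrived at just before the first $\sigma$-time when the original chain is started at $Y_{0}=V$: indeed $\sigma-1$ is geometric with $\Prob(\sigma-1=k)=(1-p)^{k}p$, and conditionally on $\sigma-1=k$ one applies $k$ independent $P^{-}$-steps. Consequently $Y_{\sigma-1}\eqdist\wh{\pi}$, and by independence of $\xi_\sigma\sim\xi^+$ from $Y_{\sigma-1}$ conditional on $\sigma$,
$$ \Psi_{\sigma:1}(V)\ =\ \xi_{\sigma}f(Y_{\sigma-1})\ \eqdist\ \xi^{+}f(W)\ \eqdist\ V, $$
so that $\cL(V)P^{*}=\cL(V)$. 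We therefore take $\wh{\pi}^{*}:=\cL(V)$.

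The tail comparison is then immediate: using $\xi^{+}\ge 1/2$ a.s.\ and $f(x)>x$ on $(1,\infty)$,
$$ \wh{\pi}^{*}((t,\infty))\ =\ \Prob(\xi^{+}f(W)>t)\ \ge\ \Prob(f(W)>2t)\ \ge\ \Prob(W>2t)\ =\ \wh{\pi}((2t,\infty)), $$
which is the claim. The only delicate point in this plan is the limiting step in the iteration establishing $Y_{\sigma-1}\eqdist\wh{\pi}$, but this is routine as $(1-p)^{n}\to 0$ and $\wh{\pi}(P^{-})^{n}$ remains a probability measure.
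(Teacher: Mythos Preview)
Your proof is correct and follows essentially the same approach as the paper: the candidate $\wh{\pi}^{*}=\cL(\xi^{+}f(W))$ is exactly the paper's conditional law $\Prob_{\pi}(\Psi_{1}(W_{0})\in\cdot\mid\xi_{1}\ge 1/2)$, and the tail comparison via $\xi^{+}\ge 1/2$ and $f(x)>x$ is identical. The only difference is that the paper establishes stationarity of $\wh{\pi}^{*}$ by invoking the standard fact that the hit chain of a stationary Markov chain inherits the conditioned stationary law, whereas you spell this out explicitly through the kernel decomposition $P=(1-p)P^{-}+pP^{+}$ and iteration; both arguments are equivalent.
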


\begin{proof}
As one readily check, the Markov chain $(W_{n},\xi_{n})_{n\ge 0}$ has stationary law
$$ \Prob_{\pi}((\Psi_{1}(W_{0}),\xi_{1})\in\cdot), $$
which in turn implies that
\begin{equation}\label{eq:def pihatstar}
\wh{\pi}^{*}\ :=\ \frac{\Prob_{\pi}((\Psi_{1}(W_{0}),\xi_{1})\in\cdot\,,\,\xi_{1}\ge 1/2)}{\Prob_{\pi}(\xi_{1}\ge 1/2)}.
\end{equation}
is a stationary distribution of the associated hit chain $(W_{n}^{*},\sigma_{n})_{n\ge 0}$, the hitting set being $[2,\infty)\times\{1/2\}$. Now observe that
\begin{align*}
\wh{\pi}^{*}((t,\infty))\ &=\ \frac{\Prob_{\pi}(\Psi_{1}(W_{0})>t\,,\,\xi_{1}\ge 1/2)}{\Prob_{\pi}(\xi_{1}\ge 1/2)}\\
&=\ \frac{\Prob_{\pi}(\xi_{1}(W_{0}+1+(W_{0}-1)^{-1})>t\,,\,\xi_{1}\ge 1/2)}{\Prob_{\pi}(\xi_{1}\ge 1/2)}\\
&\ge\ \frac{\Prob_{\pi}(W_{0}>2t\,,\,\xi_{1}\ge 1/2)}{\Prob_{\pi}(\xi_{1}\ge 1/2)}\\
&=\ \Prob_{\pi}(W_{0}>2t)\\ 
&=\ \wh{\pi}((2t,\infty))
\end{align*}
for all $t>1$, where in the penultimate line we have used that $W_{0}$ is independent of $\xi_{1}$ with law $\wh{\pi}$ under $\Prob_{\pi}$.\qed
\end{proof}

\begin{Lemma}\label{lem:tail of Wsigma}
Under the assumptions of Thm.~\ref{thm2:tail logistic transform}, the law $\wh{\pi}^{*}$ defined by \eqref{eq:def pihatstar} satisfies
\begin{equation}\label{eq:tail of Wsigma}
C_{+}^{*}\ :=\ \lim_{t\to\infty}x^{\kappa}\,\wh{\pi}^{*}((x,\infty))\ \in\ \R_{>}
\end{equation}
and has thus exact tail index $\kappa$.
\end{Lemma}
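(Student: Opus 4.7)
The plan is to apply the implicit renewal theorem (Prop.~\ref{prop:IRT}) to the SFPE $W^{*}\eqdist\Psi_{\sigma:1}(W^{*})$ satisfied by any $W^{*}\eqdist\wh\pi^{*}$ independent of $\Psi_{\sigma:1}$, with multiplier $\Xi:=\prod_{k=1}^{\sigma}\xi_{k}$. The bulk of the proof is to verify that $\Xi$ satisfies (IRT-1)--(IRT-3) and that the relevant integrability condition holds. To this end, I first derive a sandwich
\[ \Xi\,w\ \le\ \Psi_{\sigma:1}(w)\ \le\ \Xi\,w+\ovl{R},\quad w\ge 2, \]
for a suitable nonnegative $\ovl{R}$. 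The lower bound follows from $\Psi_{k}(v)\ge\xi_{k}v$ for $v>1$. For the upper bound, iterate $\Psi_{k}(v)\le\xi_{k}v+\xi_{k}(1+(v-1)^{-1})$ along $v_{k}:=\Psi_{k:1}(w)$, using that $\Psi_{k}$ attains its minimum $4\xi_{k}$ at $v=2$ and thus $v_{k-1}-1\ge 4\xi_{k-1}-1$ for $k\ge 2$; this yields $\ovl{R}=\sum_{k=1}^{\sigma}s_{k}\prod_{j=k+1}^{\sigma}\xi_{j}$ with $s_{1}:=2\xi_{1}$ and $s_{k}:=\xi_{k}(1+(4\xi_{k-1}-1)^{-1})$ for $k\ge 2$.

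Condition (IRT-1) for $\Xi$ reduces to a geometric series: with $p:=\Erw\xi^{\kappa}\1_{\{\xi<1/2\}}$ and $q:=\Erw\xi^{\kappa}\1_{\{\xi\ge 1/2\}}$, the iid-ness of the $\xi_{k}$ together with the definition of $\sigma$ yield $\Erw\Xi^{\kappa}=\sum_{n\ge 1}p^{n-1}q=q/(1-p)=1$, using $p+q=\Erw\xi^{\kappa}=1$. A parallel computation with $\sum_{k=1}^{\sigma}|\log\xi_{k}|$ inserted under the sum gives (IRT-2) from $\Erw\xi^{\kappa}\log^{+}\xi<\infty$. For (IRT-3): if $\log\Xi\in d\Z$ a.s.\ for some $d>0$, then the event $\{\sigma=1\}$ forces $\log\xi\in d\Z$ a.s.\ on $\{\xi\ge 1/2\}$, after which the event $\{\sigma=2\}$ together with independence of $\xi_{1},\xi_{2}$ forces $\log\xi\in d\Z$ a.s.\ on $\{\xi<1/2\}$ as well, contradicting nonarithmeticity of $\log\xi$.

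The technically heaviest step is $\Erw\ovl{R}^{\kappa}<\infty$. Here the geometric decay $\prod_{j=k+1}^{\sigma-1}\xi_{j}\le 2^{-(\sigma-1-k)}$ on $\{\sigma>k\}$ (valid because $\xi_{j}<1/2$ for $j<\sigma$) combines with $\Erw\xi^{\kappa}<\infty$ and, crucially, the extra hypothesis $\Erw(4\xi-1)^{-\kappa}<\infty$ from \eqref{eq:extra moment at 4 kappa} that controls $\Erw s_{k}^{\kappa}$; one splits into $\kappa\le 1$ (subadditivity of $x\mapsto x^{\kappa}$) and $\kappa>1$ (the inequality used around \eqref{eq:moment ARCH}) and also invokes $\Erw(W^{*})^{p}<\infty$ for every $p<\kappa$, a by-product of the upper sandwich and Prop.~\ref{prop:tails of perpetuities}. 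With these ingredients in hand, Prop.~\ref{prop:IRT}(a) gives $\lim_{t\to\infty}t^{\kappa}\wh\pi^{*}((t,\infty))=C_{+}^{*}\in[0,\infty)$. Positivity of $C_{+}^{*}$ follows from the lower sandwich: the IFS generated by $F(w):=2\vee\Xi w$ is of the type in Prop.~\ref{prop:tails of max-type SFPE} with any $r<2$, hence has a stationary law $\pi_{F}$ with $\lim_{t\to\infty}t^{\kappa}\Prob(Y^{\infty}>t)>0$ for $Y^{\infty}\eqdist\pi_{F}$ (since $\Prob(\uline{R}>r)=1$), and by Thm.~\ref{thm:IFS basic tail result} $Y^{\infty}\le W^{*}$ a.s., so $C_{+}^{*}\ge\lim t^{\kappa}\Prob(Y^{\infty}>t)>0$. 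The main obstacle is the moment estimate $\Erw\ovl{R}^{\kappa}<\infty$, which requires a careful pairing of the geometric decay from $\xi_{j}<1/2$ on $\{j<\sigma\}$ with the singularity of $(4\xi-1)^{-1}$ at $\xi=1/4$.
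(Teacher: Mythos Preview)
Your proposal is correct and follows the same overall strategy as the paper: sandwich $\Psi_{\sigma:1}$ between two affine-type maps with common multiplier $\Xi=\Pi_{\sigma}$, verify (IRT-1)--(IRT-3) for $\Xi$, establish the $\kappa$-moment of the additive remainder, and conclude via Thm.~\ref{thm:IFS basic tail result}/Thm.~\ref{thm:IFS exact TI} and Prop.~\ref{prop:IRT}.

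The differences are in the technical details. The paper writes the remainder as $Q_{1}+Q_{2}$ with $Q_{1}=\sum_{n=1}^{\sigma}\xi_{n}$ and $Q_{2}=\sum_{n=2}^{\sigma}\xi_{n}/(4\xi_{n-1}-1)+\xi_{1}$, and proves $\Erw Q_{i}^{\kappa}<\infty$ for $\kappa>1$ by invoking external stopped-sum results (Gut for iid increments, Janson for the 1-dependent sequence in $Q_{2}$). Your $\ovl{R}=\sum_{k=1}^{\sigma}s_{k}\prod_{j=k+1}^{\sigma}\xi_{j}$ retains the products from the iteration and handles the moment directly via the built-in geometric decay $\prod_{j=k+1}^{\sigma-1}\xi_{j}\le 2^{-(\sigma-1-k)}$; this is more self-contained and avoids the appeal to \cite{Gut:09,Janson:83}. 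For the lower bound the paper takes $F(x)=\Pi_{\sigma}x\vee\sum_{n=1}^{\sigma}\xi_{n}$, while your $F(w)=2\vee\Xi w$ is simpler and equally sufficient for positivity of $C_{+}^{*}$.

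One small misattribution: the moment $\Erw(W^{*})^{p}<\infty$ for $p<\kappa$ is not needed for $\Erw\ovl{R}^{\kappa}<\infty$ (which involves only the $\xi_{k}$), but rather for the IRT integrability condition $\Erw\big|\Psi_{\sigma:1}(W^{*})^{\kappa}-(\Xi W^{*})^{\kappa}\big|<\infty$ when $\kappa>1$, via the inequality around \eqref{eq:moment ARCH}. This does not affect the validity of your argument.
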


\begin{proof}
Put $\Pi_{n}:=\xi_{1}\cdot...\cdot\xi_{n}$ for $n\ge 1$. Then, for any $x\ge 2$,
\begin{align*}
\Psi_{\sigma:1}(x)\ &=\ \xi_{\sigma}\left(\Psi_{\sigma-1:1}(x)+1+\frac{1}{\Psi_{\sigma-1:1}(x)-1}\right)\\
&\le\ \xi_{\sigma}\Psi_{\sigma-1:1}(x)\ +\ \xi_{\sigma}\ +\ \frac{\xi_{\sigma}}{4\xi_{\sigma-1}-1}\\
&\vdots\\
&\le\ \Pi_{\sigma}x\ +\ \sum_{n=1}^{\sigma}\xi_{n}\ +\ \sum_{n=2}^{\sigma}\frac{\xi_{n}}{4\xi_{n-1}-1}\ +\ \frac{\xi_{1}}{x-1}\\
&\le\ \Pi_{\sigma}x\ +\ \sum_{n=1}^{\sigma}\xi_{n}\ +\ \sum_{n=2}^{\sigma}\frac{\xi_{n}}{4\xi_{n-1}-1}\ +\ \xi_{1}\ =:\ G(x),
\end{align*}
and in a similar manner we find that
\begin{align*}
\Psi_{\sigma:1}(x)\ &\ge\ \xi_{\sigma}\left(\Psi_{\sigma-1:1}(x)+1\right)\ \ge\ \Pi_{\sigma}x\ +\ \sum_{n=1}^{\sigma}\xi_{n}\ \ge\ \Pi_{\sigma}x\ \vee\ \sum_{n=1}^{\sigma}\xi_{n}\ =:\ F(x).
\end{align*}
We thus see that $\Psi_{\sigma:1}$ is bounded from below and above by random affine functions, namely
$$ F(x)\ =\ \Pi_{\sigma}x\vee Q_{1}\quad\text{and}\quad G(x)\ =\ \Pi_{\sigma}x+Q_{1}+Q_{2} $$
where
$$ Q_{1}\ :=\ \sum_{n=1}^{\sigma}\xi_{n}\quad\text{and}\quad Q_{2}\ :=\ \sum_{n=1}^{\sigma}\frac{\xi_{n}}{4\xi_{n-1}-1}\ +\ \xi_{1}, $$
which are both positive random variables.
Therefore, it follows from Thm.~\ref{thm:IFS exact TI} that $\wh{\pi}^{*}$ has exact tail index $\kappa$ if we still verify that
\begin{description}[(2)]
\item[(1)] $\Pi_{\sigma}$ satisfies (IRT-1)-(IRT-3) for $\kappa>0$ as given, in particular $\Erw\log\Pi_{\sigma}\in\R_{<}$.
\item[(2)] $\Erw Q_{1}^{\kappa}<\infty$ and $\Erw Q_{2}^{\kappa}<\infty$.
\end{description}
It is then also easily seen that
$$ 0\ <\ \Erw\big(\Psi_{\sigma:1}(W^{*})^{\kappa}-(\Pi_{\sigma}W^{*})^{\kappa}\big)\ \le\ \Erw\big(G(W^{*})^{\kappa}-(\Pi_{\sigma}W^{*})^{\kappa}\big)\ <\ \infty $$
where $W^{*}$ has law $\wh{\pi}^{*}$ and is independent of all other occuring random variables. Hence \eqref{eq:tail of Wsigma} follows by an appeal to Prop.~\ref{prop:IRT}.

\vspace{.1cm}
We proceed to the proof of (1). First, $\Erw\log\Pi_{\sigma}\in\R_{<}$ follows by \eqref{eq:extra moment at 4} and Wald's identity, viz.
$$ \Erw\log\Pi_{\sigma}\ =\ \Erw\sum_{n=1}^{\sigma}\log\xi_{n}\ =\ \Erw\log\xi\,\Erw\sigma\ \in\ \R_{<}. $$
Using that $\sigma$ has a geometric distribution and putting $p:=\Erw\xi^{\kappa}\1_{\{\xi\ge 1/2\}}$, thus $q:=\Erw\xi^{\kappa}\1_{\{\xi<1/2\}}=1-p$, we further obtain
\begin{align*}
\Erw\Pi_{\sigma}^{\kappa}\ =\ \sum_{n\ge 1}\Erw\Pi_{n}^{\kappa}\1_{\{\sigma=n\}}\ =\ p\sum_{n\ge 1}q^{n}\ =\ 1
\end{align*}
as well as (noting that $0<p\vee q<1$)
\begin{align*}
\Erw\Pi_{\sigma}^{\kappa}\log\Pi_{\sigma}\ &=\ \sum_{n\ge 1}\sum_{k=1}^{n}\Erw\Pi_{n}^{\kappa}\log\xi_{k}\1_{\{\sigma=n\}}\ \le\ \sum_{n\ge 1}n(p\vee q)^{n-1}\Erw\xi^{\kappa}\log\xi\ <\ \infty.
\end{align*}
Finally, the lattice-type of $\log\Pi_{\sigma}=\sum_{n=1}^{\sigma}\log\xi_{n}$ given $\log\Pi_{\sigma}>0$ is easily seen to be the same as the lattice-type of $\log\xi$ given $\log\xi>0$.
This completes the proof of (1).

\vspace{.1cm}
Assertion (2) in the case $0<\kappa\le 1$ is easily obtained by a subadditivity argument in combination with $\Erw\xi^{\kappa}<\infty$ (by (IRT-1)) and
\begin{equation}\label{eq2:extra moment at 4 kappa}
\Erw\left(\frac{\xi_{n}}{4\xi_{n-1}-1}\right)^{\kappa}\ =\ \Erw\xi^{\kappa}\,\Erw(4\xi-1)^{-\kappa}\ =\ \Erw(4\xi-1)^{-\kappa}\ <\ \infty
\end{equation}
which is guaranteed by \eqref{eq:extra moment at 4 kappa}. So let $\kappa>1$ in which case $\Erw\xi<\infty$. Then $\Erw Q_{1}^{\kappa}<\infty$ follows directly from a standard result for stopped random walks, see Thm.~5.1 in Gut's monography \cite{Gut:09}, when decomposing $Q_{1}$ in the form
$$ Q_{1}\ =\ \sum_{n=1}^{\sigma}(\xi_{n}-\Erw\xi)\ +\ \sigma\,\Erw\xi. $$
But by another use of \eqref{eq2:extra moment at 4 kappa}, a similar result applies to the stopped sum $Q_{2}$ of the 1-dependent, almost stationary sequence
$$ \xi_{1},\ \frac{\xi_{2}}{4\xi_{1}-1},\ \frac{\xi_{3}}{4\xi_{2}-1},\ \ldots, $$
see Janson \cite[Thm. 1.3]{Janson:83}.\qed
\end{proof}

\begin{proof}[of Thm.~\ref{thm2:tail logistic transform}]
By using the two previous lemmata, we infer that
\begin{align*}
\limsup_{x\to\infty}x^{\kappa}\,\wh{\pi}((x,\infty))\ \le\ \limsup_{x\to\infty}x^{\kappa}\,\wh{\pi}^{*}((x/2,\infty))\ \in\ \R_{>}
\end{align*}
which combined with Thm.~\ref{thm:tail logistic transform}(a) proves that $\wh{\pi}$ has exact tail index $\kappa$. In particular, if $W\eqdist\wh{\pi}$, then $\Erw W^{p}<\infty$ for any $p\in (0,\kappa)$. Since $\Psi(x)\ge 4\xi$ for all $x>1$ further implies
\begin{align*}
\Prob\left(\frac{1}{W-1}>x\right)\ =\ \Prob\left(\frac{1}{\Psi(W)}>x\right)\ \le\ \Prob\left(\frac{1}{4\xi-1}>x\right)
\end{align*}
for all $x>1$ and thus $\Erw(W-1)^{-\kappa}\le\Erw(4\xi-1)^{-\kappa}<\infty$, it is finally not difficult to conclude that
\begin{align*}
\Erw\Big(\Phi(X)^{-\kappa}-(X/\xi)^{-\kappa}\Big)\ =\ \Erw\Big(\Psi(W)^{\kappa}-(\xi W)^{\kappa}\Big)\ <\ \infty
\end{align*}
and then \eqref{eq:exact tail index pihat} by an appeal to Prop.~\ref{prop:IRT}.\qed
\end{proof}

\subsection{The stochastic Ricker model}\label{subsec:Ricker}

Our next example, taken from the theory of population dynamics, is very similar in flavor to the previous one.

According to Hassell \cite{Hassell:75}, a good model for population dynamics in a limited environment should bear the following features:
\begin{itemize}
\item a potential of exponential increase when the population size is small;
\item a density-dependent feedback that progressively reduces the actual rate of increase.
\end{itemize}
A deterministic model that meets these requirements was introduced by Ricker \cite{Ricker:54} and is of the form $x_{n}=\beta^{-1}x_{n-1}e^{-\gamma\,x_{n-1}}$, where $\beta,\gamma>0$ are the model parameters. While $\beta^{-1}$ should be interpreted as the per capita reproduction rate, the term $e^{-\gamma\,x_{n-1}}$ takes care of the second requirement to prevent the population from unlimited growth due to limited resources. Environmental stochasticity may be introduced by allowing $\beta$ and/or $\gamma$ to vary in time.
The following stochastic version in which these parameters are replaced with iid $\R_{>}\times\R_{\ge}$-valued random variables $(\beta_{n},\gamma_{n})$ has been studied by Fagerholm and H\"ogn\"as \cite{FagerholmHog:02} (see also \cite{GylHogKos:94b,GylHogKos:94a} by Gyllenberg et al. for the case when only one parameter is random) and by Athreya \cite{Athreya:03} within a class of more general IFS on $\R_{\ge}$. For $n\ge 1$, consider the Markov chain (and IFS)
\begin{equation*}%\label{eq:Ricker model}
X_{n}\ =\ \Phi_{n}(X_{n-1})\ :=\ \frac{X_{n-1}}{\beta_{n}}e^{-\gamma_{n}X_{n-1}},\quad n\ge 1,
\end{equation*}
with state space $\X=\R_{\ge}$. As in the previous example, it has 0 as an absorbing state and thus $\delta_{0}$ as a trivial stationary distribution. On the other hand, the IFS may also be studied on the positive halfline $\R_{>}$ in which case 0 can only be reached in the limit. As shown in \cite{Athreya:03}, a stationary distribution $\pi$ with $\pi(\R_{>})=1$ exists if
\begin{equation}\label{eq:extra moment Ricker}
-\infty<\Erw\log\beta<0,\quad\Erw\gamma<\infty\quad\text{and}\quad\Erw\rho^{-1}<\infty,
\end{equation}
where as usual $(\beta,\gamma)$ denotes a generic copy of the $(\beta_{n},\gamma_{n})$ and $\rho:=\beta\gamma$. By studying $(\log X_{n})_{n\ge 0}$ within the framework of Harris chains, a similar result has been obtained in \cite{FagerholmHog:02} under more restrictive assumptions (but allowing $\Prob(\gamma=0)>0$ which is obviously ruled out by \eqref{eq:extra moment Ricker}).

In order to study the behavior of $\pi$ at 0 or, equivalently, the tail of $\wh{\pi}$ (as defined in \eqref{eq:def pihat}) at $\infty$, we again consider the conjugation of $(X_{n})_{n\ge 0}$ with $x\mapsto x^{-1}$, viz.
\begin{equation*}
W_{n}\ =\ \Psi(W_{n-1})\ :=\ \beta_{n}W_{n-1}e^{\gamma_{n}/W_{n-1}},\quad n\ge 1,
\end{equation*}
on $\R_{>}$. Note that $\Psi$ is convex and attains its minimal value $\beta\gamma\,e$ at $x=\gamma$. We are therefore in a very similar situation as in the previous example, and the role of $4\xi$ is here taken by $\rho=\beta\gamma$. Again, we first give a result in the simpler case when $\rho$ stays bounded away from 0.

\begin{Theorem}\label{thm:tail Ricker model}
Assume \eqref{eq:extra moment Ricker} and that $\beta$ satisfies (IRT-1)-(IRT-3) for some $\kappa>0$. Then the following assertions hold true for any stationary distribution $\pi$ of $(X_{n})_{n\ge 0}$ on $\R_{>}$:
\begin{description}[(b)]
\item[(a)] $\wh{\pi}$ has upper tail index $\kappa$ and \eqref{eq:upper tail index pihat} holds.\vspace{.1cm}
\item[(b)] If $\rho\ge a/e$ a.s. for some $a>0$, and $\Erw\rho^{\kappa}<\infty$, then $\pi((0,1/a])=1$ and
\begin{equation}\label{eq:exact tail index pihat Ricker}
\lim_{x\downarrow 0}\,x^{-\kappa}\pi((0,x))\ =\ \frac{\Erw(\beta/X)^{\kappa}\big(e^{\kappa\gamma\,X}-1\big)}{\kappa\mu_{\kappa}}\ \in\ \R_{>},
\end{equation}
where $X$ is independent of $\Phi$ with $X\eqdist\pi$ and $\mu_{\kappa}=\Erw\beta^{\kappa}\log\beta$. In particular, $\wh{\pi}$ has exact tail index $\kappa$.
\end{description}
\end{Theorem}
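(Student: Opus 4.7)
For (a), my plan is to apply Theorem~\ref{thm:IFS basic tail result} to the conjugate IFS $(W_n)_{n\ge 0}$ with reference point $x_0=0$, using the lower affine bound $\Psi(x)=\beta x\,e^{\gamma/x}\ge\beta x+\rho$ (a consequence of $e^y\ge 1+y$). Since part (a) does not assume $\Erw\rho^\kappa$ finite, I would truncate and work with $F_c(x):=\beta x+(\rho\wedge c)$ for an arbitrary fixed $c>0$; the resulting random affine IFS is mean contractive (by $\Erw\log\beta<0$), has $\Erw(\rho\wedge c)^\kappa\le c^\kappa<\infty$, and satisfies $\Prob(\rho\wedge c>0)=1$ by \eqref{eq:extra moment Ricker}. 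Proposition~\ref{prop:tails of perpetuities} (together with Rem.~\ref{rem:tails of perpetuities}) then provides a stationary law with exact tail index $\kappa$, and Lemma~\ref{lem:basic lemma} transports this into the desired lower bound $\liminf_{t\to\infty}t^\kappa\wh\pi((t,\infty))>0$, which is equivalent to \eqref{eq:upper tail index pihat} via $X=1/W$.

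For (b), I would first observe that $\Psi$ is minimised at $x=\gamma$ with $\Psi(\gamma)=\rho\,e\ge a$, so that $[a,\infty)$ is absorbing for $(W_n)_{n\ge 0}$ and hence $\wh\pi([a,\infty))=\pi((0,1/a])=1$. On $[a,\infty)$ I would sandwich $\Psi$ between $F_c$ from (a) and the upper affine bound $G(x):=\beta x+a\beta(e^{\gamma/a}-1)$, the latter obtained from the fact, easily checked by differentiating, that $x\mapsto\beta x(e^{\gamma/x}-1)$ is nonincreasing on $\R_{>}$ and hence bounded on $[a,\infty)$ by its value at $x=a$. Provided the moment finiteness of $G$, Theorem~\ref{thm:IFS exact TI} then delivers that $\wh\pi$ has exact tail index $\kappa$. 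To identify the constant, I would apply Proposition~\ref{prop:IRT}(a) directly to $W\eqdist\Psi(W)=\beta W\,e^{\gamma/W}$ with $M=\beta\ge 0$: the integrand $\Psi(W)^\kappa-(\beta W)^\kappa=\beta^\kappa W^\kappa(e^{\kappa\gamma/W}-1)$ rewrites under $W=1/X$ as $(\beta/X)^\kappa(e^{\kappa\gamma X}-1)$, yielding \eqref{eq:exact tail index pihat Ricker}; positivity is immediate from $\gamma>0$ (guaranteed by $\rho\ge a/e>0$).

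The technical core of the argument is verifying the two integrability conditions $\Erw[a\beta(e^{\gamma/a}-1)]^\kappa<\infty$ (needed for Thm.~\ref{thm:IFS exact TI}) and $\Erw\beta^\kappa W^\kappa(e^{\kappa\gamma/W}-1)<\infty$ (needed for Prop.~\ref{prop:IRT}) starting only from $\Erw\rho^\kappa<\infty$. Using $e^y-1\le y\,e^y$ together with $W\ge a$, both reduce to controlling expectations like $\Erw\beta^\kappa\gamma\,e^{\kappa\gamma/a}$, possibly multiplied by negative moments of $X$ of order strictly less than $\kappa$ (which are finite as a byproduct of the preliminary tail estimate). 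The coupling $\rho=\beta\gamma\ge a/e$, which forces $\beta$ to be small whenever $\gamma$ is large, should allow such moments to be dominated by $\Erw\rho^\kappa$ together with a constant depending on $a$, after splitting according to $\{\gamma\le\gamma_0\}$ vs $\{\gamma>\gamma_0\}$; this delicate moment estimate is where I expect the main difficulty to lie.
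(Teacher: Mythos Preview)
Your argument for part~(a) is sound; the truncation $F_{c}(x)=\beta x+(\rho\wedge c)$ is a clean way to sidestep the absent moment hypothesis on $\rho$, and the paper's own lower bound $F(x)=\rho e\vee\beta x$ (via Prop.~\ref{prop:tails of max-type SFPE}) would in fact need the same kind of truncation. So~(a) goes through, by a route parallel to the paper's.

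The gap is in part~(b). Your upper bound $G(x)=\beta x+a\beta(e^{\gamma/a}-1)$ on $[a,\infty)$ is correct as an inequality (the monotonicity of $x\mapsto x(e^{\gamma/x}-1)$ is valid), but the moment condition $\Erw\big[a\beta(e^{\gamma/a}-1)\big]^{\kappa}<\infty$ simply does not follow from $\Erw\rho^{\kappa}<\infty$. Your heuristic that ``$\rho=\beta\gamma\ge a/e$ forces $\beta$ to be small whenever $\gamma$ is large'' has the direction reversed: a \emph{lower} bound on $\rho$ yields $\beta\ge a/(e\gamma)$, which bounds $\beta$ from \emph{below}, not above. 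Nothing in the hypotheses prevents $\beta$ and $\gamma$ from being independent with $\beta$ bounded and $\gamma$ heavy-tailed but lacking exponential moments; then $\Erw\rho^{\kappa}=\Erw\beta^{\kappa}\,\Erw\gamma^{\kappa}<\infty$ while $\Erw\beta^{\kappa}(e^{\gamma/a}-1)^{\kappa}=\infty$. The same obstruction blocks your verification of \eqref{eq:IRT cond1-psi} via $e^{y}-1\le ye^{y}$, since the factor $e^{\kappa\gamma/a}$ reappears.

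The paper circumvents this by \emph{not} insisting on slope $\beta$ in the majorant. It uses a one-parameter family $G_{r}(x)=r\rho\,e^{1/r}+\beta e^{1/r}x$: the intercept $r\rho\,e^{1/r}$ has finite $\kappa$-th moment directly from $\Erw\rho^{\kappa}<\infty$, while the inflated slope $\beta e^{1/r}$ satisfies (IRT-1)--(IRT-3) with some $\kappa_{r}<\kappa$. Theorem~\ref{thm:IFS basic tail result} then gives lower tail index $\ge\kappa_{r}$, and letting $r\to\infty$ recovers $\kappa$. For the finiteness of the numerator in \eqref{eq:exact tail index pihat Ricker} the paper likewise splits according to $\{W\le r\gamma\}$ versus $\{W>r\gamma\}$ (a \emph{random} threshold, so that $\gamma/W\le 1/r$ on the second event), which keeps all estimates in terms of $\Erw\rho^{\kappa}$ and $\Erw W^{\kappa-1}$ rather than exponential moments of $\gamma$. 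The missing idea, in short, is to trade a small perturbation of the multiplicative factor for control of the additive remainder.
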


\begin{proof}
The arguments are very similar to those in the proof of Thm.~\ref{thm:tail logistic transform} and therefore provided in shorter form.

\vspace{.1cm}
(a) First note that
\begin{align}\label{eq:Ricker lower bound}
\Psi(x)\ \ge\ F(x)\ :=\ \rho e\vee\beta x,\quad x\in\R_{>}.
\end{align}
The IFS generated by $F$ is mean contractive with stationary law $\pi_{F}$ satisfying
\eqref{eq:pi_F tail} by Prop.~\ref{prop:tails of max-type SFPE} (with $r=0$). This entails \eqref{eq:upper tail index pihat}.

\vspace{.1cm}
(b) If $\rho e\ge a$ a.s. for some $a>0$, then $\Psi(x)\ge\Psi(\gamma)=\rho e$ for all $x>0$ implies that $[a,\infty)$ is an absorbing set for the IFS $(W_{n})_{n\ge 0}$ generated by $\Psi$. But for $x\ge a$ and by bounding $\Psi(x)$ on $[a,r\gamma]$ and $[r\gamma,\infty)$ separately, we readily find that
$$ \Psi(x)\ \le\ G_{r}(x)\ :=\ r\rho e^{1/r}+\beta e^{1/r}x $$
for any $r>0$ so large that $\max_{a\le x\le r\gamma}\Psi(x)=\Psi(r\gamma)=r\rho e^{1/r}\ge\frac{r}{e}e^{1/r}\ge\Psi(a)$. The IFS generated by $G_{r}$ is mean contractive and its stationary distribution $\pi_{G}$ has exact tail index $\kappa_{r}<\kappa$ because $\Erw\rho^{\kappa}<\infty$ and $\beta e^{1/r}$ satisfies (IRT-1)-(IRT-3) for $\kappa_{r}$ such that $\Erw(\beta e^{1/r})^{\kappa_{r}}=1$. It follows by Thm.~\ref{thm:IFS basic tail result} that the lower tail index of $\wh{\pi}$ is bounded from below by $\kappa_{r}$ for all sufficiently large $r$. Since $\lim_{r\to\infty}\kappa_{r}=\kappa$ and by (a), we see that $\wh{\pi}$ has in fact tail index $\kappa$, in particular $\Erw W^{p}<\infty$ for any $p<\kappa$. 

\vspace{.08cm}
Finally, observe that the expectation in \eqref{eq:exact tail index pihat Ricker} equals the expectation of the positive random variable $\Psi(W)^{\kappa}-(\beta W)^{\kappa}=(\beta W)^{\kappa}(e^{\kappa\gamma/W}-1)$ and is therefore positive as well. It is also finite because (with $r$ so large that $e^{\kappa/x}-1\le\frac{2\kappa}{x}$ on $[r,\infty)$)
\begin{align*}
&\Psi(W)^{\kappa}-(\beta W)^{\kappa}\ \le\ 
\begin{cases}
\Psi(r\gamma)^{\kappa}+(r\rho)^{\kappa},&\text{if }a\le W\le r\gamma,\\
\hfill 2\kappa\beta^{\kappa}\gamma\,W^{\kappa-1},&\text{if }W\ge r\gamma,
\end{cases}\\
&\Erw\big(\Psi(r\gamma)^{\kappa}+(r\rho)^{\kappa}\big)\ \le\ c_1\,\Erw\rho^{\kappa}\ <\ \infty,
\intertext{and}
\Erw\beta^{\kappa}\gamma\,W^{\kappa-1}\ &=\ \Erw\beta^{\kappa-1}\rho\,\Erw W^{\kappa-1}\ \le\ \big(\Erw\beta^{\kappa}\big)^{(\kappa-1)/\kappa}\big(\Erw\rho^{\kappa}\big)^{1/\kappa}\Erw W^{\kappa-1}\ <\ \infty
\end{align*}
where H\"older's inequality and $\Erw W^{\kappa-1}<\infty$ have been utilized in the last estimation and where $c_{1},c_{2}\in\R_{>}$ denote suitable constants. Now assertion \eqref{eq:exact tail index pihat Ricker} follows by an appeal to Prop.~\ref{prop:IRT}.\qed
\end{proof}

The general case when $\Prob(\rho<a)>0$ for all $a>0$ is more complicated than the corresponding case in the previous example although it may be approached in a similar manner. We confine ourselves to a short discussion. First, pick $a>0$ such that $\Prob(\rho\ge a/e)>0$ and let $\sigma=\sigma_{1},\sigma_{2},...$ denote the successive epochs $n$ at which $\rho_{n}:=\beta_{n}\gamma_{n}\ge a/e$. Then the IFS $(W_{n}^{*})_{n\ge 0}$ generated by $\Psi_{\sigma:1}$ has state space $[a,\infty)$ (see \eqref{eq:Ricker lower bound}) and a stationary law $\wh{\pi}^{*}$ satisfying
$$ \wh{\pi}^{*}((t,\infty))\ \ge\ c\,\wh{\pi}((bt,\infty)) $$
for all $t>0$ and suitable $b,c>0$. Indeed, arguing as in Lemma \ref{lem:RLT geom sampling}, we find here that
\begin{align*}
\wh{\pi}^{*}((t,\infty))\ &=\ \frac{\Prob_{\pi}(\Psi_{1}(W_{0})>t,\,\rho_{1}\ge a/e)}{\Prob_{\pi}(\rho_{1}\ge a/e}\\
&=\ \frac{\Prob_{\pi}(\beta_{1}W_{0}e^{\gamma_{1}/W_{0}}>t,\,\rho_{1}\ge a/e)}{\Prob_{\pi}(\rho_{1}\ge a/e)}\\
&\ge\ \frac{\Prob_{\pi}\left(\frac{aW_{0}}{e\gamma_{1}}>t,\,\rho_{1}\ge a/e,\,e\gamma_{1}\le ab\right)}{\Prob_{\pi}(\rho_{1}\ge a/e)}\\
&\ge\ \frac{\Prob_{\pi}(W_{0}>bt)\,\Prob(\rho_{1}\ge a/e,\,e\gamma_{1}\le ab)}{\Prob_{\pi}(\rho_{1}\ge a/e)}\\
&=\ c\,\Prob_{\pi}(W_{0}>bt)\\ 
&=\ c\,\wh{\pi}((bt,\infty)),
\end{align*}
where $b$ is chosen sufficiently large and $c$ has the obvious meaning.

In view of Thm.~\ref{thm:tail Ricker model}(a) we are thus left with a proof of $\wh{\pi}^{*}$ having lower tail index $\kappa$. As an analog to what is shown in the proof of Lemma \ref{lem:tail of Wsigma} one can here verify as well that $F(x)\le\Psi_{\sigma:1}(x)\le G(x)$ with random functions $F(x)=\Pi_{\sigma}x\vee Q_{1}$ and $G(x)=\Pi_{\sigma}x+Q_{2}$ for $x>0$, where $\Pi_{n}:=\beta_{1}\cdot...\cdot\beta_{n}$ for $n\ge 1$,
\begin{align*}
Q_{1}\ :=\ \sum_{n=1}^{\sigma}\rho_{n}\quad\text{and}\quad 
Q_{2}\ :=\ a\sum_{n=1}^{\sigma-1}\beta_{\sigma}\cdot...\cdot\beta_{\sigma-n+1}\left(e^{\gamma_{\sigma-n+1}/\rho_{\sigma-n}}-1\right).
\end{align*}
Now, in order to formulate an analog of Thm.~\ref{thm2:tail logistic transform} for the present example, its conclusion being that $\wh{\pi}$ has exact tail index $\kappa$, we need assumptions on $(\beta,\gamma)$ (besides those in Thm.~\ref{thm:tail Ricker model}) which ensure $\Erw Q_{1}^{\kappa}<\infty$ and $\Erw Q_{2}^{\kappa}<\infty$.
While $\Erw Q_{1}^{\kappa}<\infty$ is easily seen to follow from $\Erw\rho^{\kappa}<\infty$, a natural sufficient condition on $(\beta,\gamma)$ for $\Erw Q_{2}^{\kappa}<\infty$ appears to be more difficult to find and will not be further discussed here.

\subsection{A class of random Lipschitz maps on $\R^{m}$}\label{subsec:Mirek}

We next take a brief look at an example in the multidimensional case, namely the class of IFS studied by Mirek in \cite{Mirek:11}. For a vector $x\in\R^{m}$, let $|x|$ denote its Euclidean norm. Put $|A|:=\max_{|x|=1}|Ax|$ for a $m\times m$ matrix $A$.
Consider a sequence $\Psi_{1},\Psi_{2},...$ of iid Lipschitz maps on $\R^{m}$ with generic copy $\Psi$ satisfying the following condition (see (H2) in \cite{Mirek:11}):\\
There exist a random variable $Q$ with $\Prob(Q>0)>0$, a positive random variable $\beta$, and a random $m\times m$ matrix $\Gamma$ taking values in a closed subgroup of the orthogonal group $\cO(\R^{m})$, such that
\begin{equation}\label{eq:Mirek's (H2)}
\sup_{x\in\R^{m}}|\Psi(x)-\beta\Gamma x|\ \le\ Q\quad\Prob\text{-a.s.}
\end{equation}
As an immediate consequence, note that
$$ \lim_{r\to\infty}r^{-1}\,\Psi(rx)\ =\ \beta\Gamma x\quad\Prob\text{-a.s.} $$
for all $x\in\R^{m}$.

\vspace{.1cm}
Under some natural additional assumptions, which particularly ensure that the IFS associated with $\Psi,\Psi_{1},...$ is contractive, the following discussion will show that its unique stationary law $\pi$ has exact tail index $\kappa>0$ in the sense that
\begin{equation}\label{eq:exact tail index Mirek}
\lim_{r\to\infty}\,r^{\kappa}\,\Prob_{\pi}(|X_{0}|>r)\ =\ \lim_{r\to\infty}\,r^{\kappa}\,\pi(\ovl{B}(0,r)^{c})\ =:\ C\ \in\ \R_{>},
\end{equation}
where $\ovl{B}(0,r)=\{x\in\R^{m}:|x|\le r\}$ and
$$ C\ =\ \frac{\Erw_{\pi}\left(|\Psi(X_{0})|^{\kappa}-\beta|X_{0}|^{\kappa}\right)}{\kappa\mu_{\kappa}},\quad\mu_{\kappa}\ :=\ \Erw\beta^{\kappa}\log\beta. $$
(see also \cite[(1.10) of Thm. 1.8]{Mirek:11}).

\vspace{.1cm}
Put $M:=|\beta\Gamma|$ and note that $M=\beta|\Gamma|=\beta>0$. As in \cite{Mirek:11}, we assume that $M$ satisfies (IRT-1)-(IRT-3) of Prop.\ \ref{prop:IRT} for some $\kappa>0$ and that $\Erw Q^{\kappa}<\infty$. Notice that \eqref{eq:Mirek's (H2)} provides us with
\begin{equation}\label{eq:(H2) reformulated}
(\beta|x|-Q)^{+}\ \le\ |\Psi(x)|\ \le\ \beta|x|+Q\quad\Prob\text{-a.s.}
\end{equation}
for all $x\in\R^{m}$. Under the stated assumptions, the IFS generated by $G(r)=\beta r+Q$ is 
contractive on $\R_{\ge}$ with stationary distribution $\pi_{G}$ having exact tail index $\kappa$ (Prop. \ref{prop:tails of perpetuities} and subsequent remark). By an appeal to Thm.\ \ref{thm:IFS basic tail result}, notably the right inequality in \eqref{eq:general iteration tail inequality}, we thus find that
$$ \limsup_{r\to\infty}\,r^{\kappa}\,\Prob_{\pi}(|X_{0}|>r)\ \le\ \lim_{r\to\infty}r^{\kappa}\,\pi_{G}((r,\infty))\ \in\ \R_{>}, $$
in particular $\Erw_{\pi}|X_{0}|^{p}<\infty$ for any $p\in (0,\kappa)$. The latter in combination with \eqref{eq:(H2) reformulated} may further be used to verify that $\Erw_{\pi}\left(|\Psi(X_{0})|^{\kappa}-\beta|X_{0}|^{\kappa}\right)<\infty$.

\vspace{.1cm}
On the other hand, the contractive IFS generated by $(\beta r-Q)^{+}$ has trivial stationary law $\delta_{0}$ and is therefore useless for completing the proof of \eqref{eq:exact tail index Mirek}. In fact, as also pointed out in \cite{Mirek:11} and easily sustained by the example $\Psi(x)=\beta\Gamma x$, the conditions imposed so far do not exclude the possibility that $\pi$ has bounded support. We close this discussion by pointing out that \eqref{eq:exact tail index Mirek} does indeed follow if the lower bound in \eqref{eq:(H2) reformulated} may be sharpened to
$$ |\Psi(x)|\ \ge\ (\beta|x|+Q')^{+}\quad\Prob\text{-a.s.} $$ 
for some random variable $Q'$ satisfying $\Prob(Q'>0)>0$ and $\Erw|Q'|^{\kappa}<\infty$. Just note that, by Prop. \ref{prop:tails of max-type SFPE}, the IFS generated by $F(r)=(\beta r+Q')^{+}$ has stationary law $\pi_{F}$ with exact tail index $\kappa$. Consequently, by another use of Thm.\ \ref{thm:IFS basic tail result}, we then infer
$$ \liminf_{r\to\infty}\,r^{\kappa}\,\Prob_{\pi}(|X_{0}|>r)\ \ge\ \lim_{r\to\infty}r^{\kappa}\,\pi_{F}((r,\infty))\ >\ 0 $$
and thereupon \eqref{eq:exact tail index Mirek} with $C\in\R_{>}$.

\subsection{A stable IFS of iid Lipschitz maps with more than one stationary law}\label{subsec:multiple law}

Let us finally briefly address the question of uniqueness of the stationary law for an IFS satisfying the conditions of Theorem \ref{thm:IFS basic tail result}. In the following, we provide a simple example of an IFS of iid Lipschitz maps on $\R_{\ge}$ with two unbounded disjoint invariant sets on which it is contractive (though naturally being noncontractive on the whole state space). By further verifying the conditions of Theorem \ref{thm:IFS basic tail result}, we then conclude that the two unique stationary laws on these sets and thus also any convex combination have the same exact tail index.

\vspace{.1cm}
Let $\Psi_{1},\Psi_{2},...$ be iid copies of the random Lipschitz map $\Psi:\R_{\ge}\to\R_{\ge}$, defined by $\Psi(x)=\alpha x+\beta$, where $\alpha$ takes values in $\{\frac{1}{3},2\}$ and has mean one, and
\begin{align*}
\beta\ =\ 
\begin{cases}
\hfill 3,&\text{if }x=m3^{n}\text{ for some }(m,n)\in\N_{0}\times\Z,\\
\hfill \gamma,&\text{otherwise}
\end{cases}
\end{align*}
with some standard exponential random variable $\gamma$ independent of $\alpha$. As one can easily see, $\Psi(I)\subset I$ and $\Psi(I^{c})\subset I^{c}$ a.s. for $I=\N_{0}3^{\Z}:=\{m3^{n}:m\in\N_{0},\,n\in\Z\}$, plainly a countable dense subset of $\R_{\ge}$. Moreover, $\alpha$ satisfies (IRT-1)-(IRT-3) with $\kappa=1$, and the IFS generated by the $\Psi_{n}$ is contractive on each of $I$ and $I^{c}$ with unique stationary distributions $\pi_{1},\pi_{2}$. Now observe that
$$ \alpha x+(3\wedge\gamma)\ =:\ F(x)\ \le\ \Psi(x)\ \le\ G(x)\ :=\ \alpha x+(3\vee\gamma) $$
for all $x\in\R_{\ge}$, and that the IFS generated by iid copies of $F$ and $G$, respectively, are contractive with unique stationary distributions having the same exact tail index, namely one. This follows once again by the result stated in \ref{subsec:perp}. Further details can be omitted. So we see that there are IFS with multiple stationary distributions to which our results apply, the conclusion being that all stationary laws must have the same tail index.

\bibliographystyle{abbrv}
\bibliography{StoPro}

\end{document}